\begin{document}

\newtheorem{theorem}{Theorem}[section]
\newtheorem{corollary}[theorem]{Corollary}
\newtheorem{definition}[theorem]{Definition}
\newtheorem{conjecture}[theorem]{Conjecture}
\newtheorem{question}[theorem]{Question}
\newtheorem{lemma}[theorem]{Lemma}
\newtheorem{proposition}[theorem]{Proposition}
\newtheorem{example}[theorem]{Example}
\newtheorem{problem}[theorem]{Problem}
\newenvironment{proof}{\noindent {\bf
Proof.}}{\rule{3mm}{3mm}\par\medskip}
\newcommand{\remark}{\medskip\par\noindent {\bf Remark.~~}}
\newcommand{\pp}{{\it p.}}
\newcommand{\de}{\em}

\newcommand{\JEC}{{\it Europ. J. Combinatorics},  }
\newcommand{\JCTB}{{\it J. Combin. Theory Ser. B.}, }
\newcommand{\JCT}{{\it J. Combin. Theory}, }
\newcommand{\JGT}{{\it J. Graph Theory}, }
\newcommand{\ComHung}{{\it Combinatorica}, }
\newcommand{\DM}{{\it Discrete Math.}, }
\newcommand{\ARS}{{\it Ars Combin.}, }
\newcommand{\SIAMDM}{{\it SIAM J. Discrete Math.}, }
\newcommand{\SIAMADM}{{\it SIAM J. Algebraic Discrete Methods}, }
\newcommand{\SIAMC}{{\it SIAM J. Comput.}, }
\newcommand{\ConAMS}{{\it Contemp. Math. AMS}, }
\newcommand{\TransAMS}{{\it Trans. Amer. Math. Soc.}, }
\newcommand{\AnDM}{{\it Ann. Discrete Math.}, }
\newcommand{\NBS}{{\it J. Res. Nat. Bur. Standards} {\rm B}, }
\newcommand{\ConNum}{{\it Congr. Numer.}, }
\newcommand{\CJM}{{\it Canad. J. Math.}, }
\newcommand{\JLMS}{{\it J. London Math. Soc.}, }
\newcommand{\PLMS}{{\it Proc. London Math. Soc.}, }
\newcommand{\PAMS}{{\it Proc. Amer. Math. Soc.}, }
\newcommand{\JCMCC}{{\it J. Combin. Math. Combin. Comput.}, }
\newcommand{\GC}{{\it Graphs Combin.}, }

\title{  Sharp Bounds for the Signless Laplacian Spectral Radius   in Terms of Clique
Number\thanks{ This work is supported by National Natural Science
Foundation of China (No:10971137), the National Basic Research
Program (973) of China (No.2006CB805900) and a grant of Science and
Technology Commission of Shanghai Municipality (STCSM, No:
09XD1402500).}}
\author{  Bian He, Ya-Lei Jin  and Xiao-Dong Zhang \\
{\small Department of Mathematics}\\
{\small Shanghai Jiao Tong University} \\
{\small  800 Dongchuan road, Shanghai, 200240, P.R. China}\\
{\small Email:  xiaodong@sjtu.edu.cn}
\\
Dedicated to Professors Abraham Berman, Moshe Goldberg,\\ and
Raphael Loewy  in recognition of their important contributions\\
 to
linear algebra and the linear algebra community.
 }
\date{}
\maketitle
 \begin{abstract}
 In this paper, we present  a sharp upper and lower bounds for the signless Laplacian spectral
 radius of graphs in terms of clique number. Moreover, the extremal
 graphs which attain the upper and lower bounds are characterized. In addition, these
 results
 disprove
 the two conjectures on the signless Laplacian spectral radius in [P.~Hansen and C.~Lucas, Bounds and conjectures for
 the signless Laplacian index of graphs, Linear Algebra Appl., 432(2010)
 3319-3336].
 \end{abstract}

{{\bf Key words:} Signless Laplacian spectral radius; clique number,
Tur\'{a}n graph.
 }

      {{\bf MSC:} 05C50, 05C35}
\vskip 0.5cm

\section{Introduction}

 Throughout this paper,  we only consider  simple and undirected graphs.
  Let $G = (V(G),~E(G))$ be a simple graph with vertex set
$V(G)=\{v_1,\cdots, v_n\}$ and edge set $E(G)$.  Let $A(G)=(a_{ij})$
be the $(0,1)$ {\it  adjacency matrix} of $G$ with $a_{ij}=1$ for
$v_i$ adjacent to $v_j$ and $0$ otherwise. Moreover, let
$D(G)=diag(d(u), u\in V)$ be the diagonal matrix of vertex degrees
$d(u)$ of $G$. Then  $Q(G)=D(G)+A(G)$ is called {\it the signless
Laplacian matrix }
 of $G$.  The signless Laplacian matrix $Q(G)$ can be
  viewed as an operator on the space of functions
 $f:V(G)\rightarrow \mathcal{R}$ which satisfies
 $$Q(G)f(u)=\sum_{v\sim u}(f(u)+f(v)),$$
where $"\sim"$ stands for the adjacency relation.
 The largest eigenvalue of $Q(G)$ is called the {\it signless
Laplacian spectral radius} of $G$ and denoted by $q_1(G)$, or for
short $q_1$.

 Recently, the signless Laplacian matrix of a graph has received
increasing attention. For example, Desai and Rao \cite{desai1994}
used the smallest eigenvalue of the signless Laplacian matrix of a
connected graph to serve as a measure of how much a graph is close
to bipartite, since 0 is the smallest eigenvalue of $Q(G)$ if and
only if G is bipartite.  Liu and Liu  \cite{liu2008} presented lower
and/or upper bounds for the clique number and the independence
number in terms of the signless Laplacian eigenvalues. Oliveira et.
al. \cite{Oliveira2010} gave  several upper  and lower bounds for
the signless Laplacian spectral radius. Zhang \cite{zhang2008,
zhang2009} investigated the largest signless Laplacian spectral
radius for a given degree graphic sequence.
 Cvetovi\'{c} and
 Simi\'{c} \cite{cvetkovic2009}-\cite{cvetkovic2010-2}
surveyed  spectral graph theory based on the signless Laplacian
matrix of a graph.  Recently, Hansen and Lucas \cite{hansen2010}
proposed some conjectures on the signless Laplacian spectral radius.
\begin{conjecture}\label{con1}
(\cite{hansen2010}) Let $G$ be a connected graph on $n\ge 4$
vertices with signless Laplacian spectral radius $q_1$ and clique
number $\omega$. Then
\begin{equation}\label{con1-1}
q_1-\omega\le \frac{3}{2}n-4, {\rm if}\ n\  {\rm is \ \ even},
\end{equation}
\begin{equation}\label{con1-2}
\frac{q_1}{\omega}\le \frac{n}{2}.
\end{equation}
The bound for (\ref{con1-1}) is attained by and only by the
complement of a perfect matching when $n\ge 6$ is even. Moreover,
when $n\ge 9$ is odd, $q_1-\omega$ is maximum for and only for the
complement of a perfect matching on $n-3$ vertices and a triangle on
three remaining vertices. The bound for (\ref{con1-2}) is attained
by and only by the complete bipartite graph $K_{p,q}$.
\end{conjecture}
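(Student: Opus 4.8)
The plan is to handle the two inequalities separately and, for each, first to secure the bound and then to identify the extremal graphs. The workhorse throughout is the classical estimate $q_1 \le \max\{d(u)+d(v) : uv \in E(G)\}$, valid for every connected graph, together with its equality case: equality holds exactly when $G$ is regular or bipartite semiregular. Clique-number information enters through Tur\'{a}n-type degree constraints, which is what lets $\omega$ control the large degrees that would otherwise make $q_1$ big.

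For the ratio bound (\ref{con1-2}) the natural division is by $\omega$. If $\omega = 2$ then $G$ is triangle-free, so for every edge $uv$ the sets $N(u)$ and $N(v)$ are disjoint and $d(u)+d(v) = |N(u) \cup N(v)| \le n$; hence $q_1 \le n$ and $q_1/\omega \le n/2$. For the equality discussion I would feed $q_1 = n$ back into the equality case above: it forces $G$ to be regular of degree $n/2$ or bipartite semiregular with $d(u)+d(v)=n$. A short argument then collapses $G$ to a complete bipartite graph $K_{p,q}$: in the regular case a minimum-degree criterion forces bipartiteness and hence $K_{n/2,n/2}$, while in the semiregular case each side's common degree is bounded by the opposite side's size, which together with $d(u)+d(v)=n$ again gives completeness. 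For $\omega \ge 3$ I would instead prove the ratio stays strictly below $n/2$, so that no extremal graph outside the triangle-free regime appears; here the crude degree-sum bound is too weak and must be combined with the fact that large degrees force large cliques.

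For the difference bound (\ref{con1-1}) I would put the family $G_k = K_n - M_k$, the complement of a $k$-edge matching, at the center, since these graphs combine a near-maximal $q_1$ with the controlled clique number $\omega = n - k$. Each $G_k$ has an equitable partition into its $2k$ matched vertices (degree $n-2$) and its $n - 2k$ unmatched vertices (degree $n-1$), and since the Perron eigenvector of $Q(G_k)$ is constant on these two classes, $q_1(G_k)$ equals the largest eigenvalue of the quotient matrix
\begin{equation}\label{eq:quot}
B(k) = \left( \begin{array}{cc} n + 2k - 4 & n - 2k \\ 2k & 2n - 2k - 2 \end{array} \right).
\end{equation}
I would then write $f(k) = \lambda_{\max}(B(k)) - (n - k)$ in closed form and maximize it. For even $n$ this should peak at $k = n/2$, where $G_{n/2}$ is $(n-2)$-regular with $q_1 = 2n - 4$ and $\omega = n/2$, giving $f = \frac{3}{2}n - 4$ and matching (\ref{con1-1}). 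Promoting this from the family $G_k$ to all connected $G$ would go through a perturbation argument: adding an edge increases $q_1$ (the increment $Q(G+e) - Q(G) = (e_u + e_v)(e_u+e_v)^{T}$ is positive semidefinite and the Perron vector is strictly positive), and tracking whether the accompanying change in $\omega$ is favorable is what should force a complement-of-matching structure.

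The step I expect to be the genuine obstacle, and the one I would test first, is the exact extremal characterization for odd $n$. There $f(k)$ must be compared at the two rival values $k = \frac{n-3}{2}$ (the conjectured ``complement of a matching together with a triangle'') and $k = \frac{n-1}{2}$ (the complement of a near-perfect matching). I anticipate that (\ref{eq:quot}) does not favor the conjectured graph: for $k = \frac{n-1}{2}$ one finds $\omega = \frac{n+1}{2}$ and $q_1 = \frac{1}{2}\left(3n - 6 + \sqrt{n^2 - 4n + 12}\right)$, and a direct comparison of the two values of $f$ suggests the near-perfect matching already wins for the smallest odd $n$ in range. If so, the optimization cannot be closed in the direction the conjecture predicts, and the computation intended to establish the odd-case characterization instead yields a counterexample to it — which is presumably how these results end up disproving the conjecture.
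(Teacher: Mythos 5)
Your proposal treats the statement as a theorem to be proved, but it is a conjecture that this paper largely \emph{disproves}: inequality (\ref{con1-1}) is false in general. The paper's route is to prove the sharp Tur\'{a}n-type bound $q_1(G)\le q_1(T_{n,\omega})$ (Theorem \ref{thm1}, via Zykov-style vertex duplication) and then compare $q_1(T_{n,\omega})$ with $\frac{3n}{2}+\omega-4$ (Corollary \ref{cor3}): the conjectured bound holds when $\omega\le 4$ or $\omega\ge\lceil n/2\rceil$, but fails for every $5\le\omega<\lceil n/2\rceil$. A concrete counterexample is $T_{12,5}$ (parts $3,3,2,2,2$), for which $q_1=13+\sqrt{37}\approx 19.08$, so $q_1-\omega\approx 14.08>14=\frac{3}{2}\cdot 12-4$. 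This is fatal to your plan for (\ref{con1-1}): you optimize only over complements of matchings $G_k=K_n-M_k$, and within that family the maximum of $q_1-\omega$ for even $n$ really is $\frac{3}{2}n-4$, attained at $k=n/2$; but the counterexamples (Tur\'{a}n graphs having some parts of size $3$) lie outside your family, so your computation would falsely confirm the bound, and the ``promotion via perturbation'' step you defer to cannot exist, since the statement it would need to establish is false.

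Your instinct that the conjecture breaks somewhere is right, but you locate the break in the wrong place, and your one numerical prediction is backwards. For odd $n$ the conjectured extremal graph is the complement of (a perfect matching on $n-3$ vertices together with a triangle), i.e. $(K_{n-3}-M)\bigtriangledown\overline{K_3}$, which is \emph{not} your $G_{(n-3)/2}$; it has $q_1=\frac{1}{2}\bigl(3n-10+\sqrt{n^2+4n-20}\bigr)$ and $\omega=\frac{n-1}{2}$, while your $G_{(n-1)/2}$ has $q_1=\frac{1}{2}\bigl(3n-6+\sqrt{n^2-4n+12}\bigr)$ and $\omega=\frac{n+1}{2}$. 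The difference of the two values of $q_1-\omega$ is $\frac{1}{2}\bigl(\sqrt{n^2+4n-20}-\sqrt{n^2-4n+12}-2\bigr)$, which is positive for all $n\ge 9$ because $\sqrt{n^2+4n-20}-\sqrt{n^2-4n+12}=(8n-32)/\bigl(\sqrt{n^2+4n-20}+\sqrt{n^2-4n+12}\bigr)>2$; so the conjectured graph beats yours, and the genuine counterexamples are again Tur\'{a}n graphs with $5\le\omega<\lceil n/2\rceil$. Finally, your treatment of (\ref{con1-2}) is fine for $\omega=2$, and for $\omega\ge 4$ the trivial bound $q_1\le 2n-2<\frac{n\omega}{2}$ suffices, but for $\omega=3$ you need $q_1\le\frac{3n}{2}$ for $K_4$-free graphs and offer no argument; this is exactly where the paper invokes Theorem \ref{thm1}, whose bound gives $q_1\le\frac{3n-2}{2}$ when $\omega=3$.
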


\begin{conjecture}(\cite{hansen2010})\label{con2}
Let $G$ be a connected graph on $n\ge 6$ vertices with signless
Laplacian spectral radius $q_1$ and chromatic number $\chi$. Then
\begin{equation}\label{con2-1}
q_1-\chi\le \frac{3}{2}n-4, \ \ \ {\rm if }\ n \ {\rm is \ \ even.}
\end{equation}
The bound is attained by and only by the complement of a perfect
matching when $n$ is even. Moreover, when $n\ge 9$ is odd,
$q_1-\chi$ is maximum for and only for the complement of a perfect
matching on $n-3$ vertices and a triangle on the three remaining
vertices.
\end{conjecture}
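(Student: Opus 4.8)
Since the abstract announces that these conjectures are to be \emph{disproved}, my plan is not to establish Conjecture~\ref{con2} but to locate the construction that breaks it and then to identify the correct extremal graph. The quantity to control is $q_1-\chi$, so I would first look for connected graphs that keep $\chi$ small while forcing $q_1$ as large as possible. The complement of a perfect matching, which the conjecture singles out as extremal, is the cocktail-party graph $K_{(n/2)\times 2}$, i.e. exactly the balanced complete multipartite (Tur\'an) graph $T(n,n/2)$. This immediately suggests testing the whole family of balanced Tur\'an graphs $T(n,r)=K_{r\times(n/r)}$, since these are the natural candidates that trade a larger chromatic number for a higher density, and the conjecture only samples the single value $r=n/2$.

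The computation of $q_1$ for $T(n,r)$ is clean because the graph is regular of degree $n-n/r$. For a connected regular graph $Q=dI+A$ has $q_1=2d$, so $q_1(T(n,r))=2n(1-1/r)$, while $\chi(T(n,r))=\omega(T(n,r))=r$. Hence
\[
q_1-\chi = 2n\left(1-\frac{1}{r}\right)-r = 2n-\frac{2n}{r}-r,
\]
and I would simply maximize this over $r$: the derivative vanishes at $r=\sqrt{2n}$, giving value $2n-2\sqrt{2n}$. Comparing with the conjectured bound, $2n-2\sqrt{2n}>\frac{3}{2}n-4$ reduces to $(n-8)^2>0$, so the inequality in (\ref{con2-1}) fails for every even $n\neq 8$ for which a Tur\'an graph with roughly $\sqrt{2n}$ parts is available. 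The tidiest explicit witness is $n=18$, $r=6$: here $T(18,6)=K_{6\times 3}$ is $15$-regular, so $q_1=30$ and $\chi=6$, giving $q_1-\chi=24>23=\frac{3}{2}\cdot 18-4$. The same graph also refutes the clique-number inequality (\ref{con1-1}), since $\omega=6$ as well, so a single construction kills both conjectures simultaneously.

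To turn this counterexample into the sharp statement the paper aims for, I would prove that among all connected graphs on $n$ vertices with clique number $\omega$ the signless Laplacian spectral radius is maximized by the Tur\'an graph $T(n,\omega)$. The plan is the standard spectral-extremal one: take the Perron eigenvector $x$ of $Q(G)$ and show that whenever $G$ is not complete multipartite one can apply a local edge-rearrangement (a Kelmans- or compression-type move between two vertices, comparing their $x$-weights) that does not increase the clique number yet strictly increases $q_1=x^\top Q x/x^\top x$; this forces the extremal graph to be complete $\omega$-partite. One then analyses the $\omega\times\omega$ equitable quotient matrix $B$ with $B_{ii}=n-n_i$ and $B_{ij}=n_j$, and shows by a majorization/convexity argument on the part sizes $n_1,\dots,n_\omega$ that balancing the parts does not decrease (and, for $\omega\ge 3$, strictly increases) the largest eigenvalue, so $T(n,\omega)$ wins. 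The main obstacle is this extremal step: proving the rearrangement is monotone in $q_1$ while respecting the constraint $\omega(G)=\omega$ rather than merely $\omega(G)\le\omega$, and handling the unbalanced quotient when $\omega\nmid n$, where the relevant eigenvalue is no longer the clean $2n(1-1/\omega)$ but the largest root of a two-block matrix and must be estimated carefully.
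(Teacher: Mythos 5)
Your disproof is correct, and as a refutation it is complete: $T(18,6)=K_{6\times 3}$ is connected and $15$-regular, so $q_1=2\cdot 15=30$, $\chi=\omega=6$, and $q_1-\chi=24>\frac{3}{2}\cdot 18-4=23$, which violates (\ref{con2-1}) (and (\ref{con1-1}) simultaneously). This is the same extremal family the paper uses: its Corollary~\ref{cor3}(ii) shows $q_1(T_{n,\omega})>\frac{3n}{2}+\omega-4$ whenever $5\le\omega<\lceil\frac{n}{2}\rceil$, and since $\chi(T_{n,\omega})=\omega(T_{n,\omega})$ this refutes Conjecture~\ref{con2} as well. The difference lies in how $q_1$ of the Tur\'an graph is computed. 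You exploit regularity ($q_1=2d$ when the number of parts divides $n$), which is more elementary but covers only the divisible case; the paper instead works from the exact formula of Lemma~\ref{lemma2-2} valid for all $n$, together with the sharp Theorem~\ref{thm1} proved by Zykov-style vertex duplication (Lemmas~\ref{lemma2-1} and \ref{lemma21}). That heavier machinery buys three things your proposal does not deliver: the precise threshold (failure exactly for $5\le\omega<\lceil\frac{n}{2}\rceil$, validity for $\omega\le 4$ or $\omega\ge\lceil\frac{n}{2}\rceil$, Corollary~\ref{cor3}(i)), the treatment of unbalanced Tur\'an graphs when $\omega\nmid n$, and the identification of $T_{n,\omega}$ as the true maximizer of $q_1$ for given clique number, i.e.\ the correct statement replacing the conjecture. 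Your sketched route to that sharp theorem (Perron-vector-guided rearrangement toward a complete multipartite graph, then balancing parts via the equitable quotient matrix) is in the same spirit as the paper's duplication argument, but it remains a plan, and the obstacle you flag (monotonicity of the move and the unbalanced quotient) is exactly what the duplication lemmas and the cited multipartite formula resolve. One minor algebraic slip: $2n-2\sqrt{2n}>\frac{3}{2}n-4$ reduces to $(\sqrt{2n}-4)^2>0$, not $(n-8)^2>0$; both are equivalent to $n\neq 8$, so your conclusion is unaffected.
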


Hansen and Lucas  \cite{hansen2009} obtained some results on
signless Laplacian related to clique number and chromatic number.
For other results, see \cite{cvetkovic2009}-\cite{cvetkovic2010-2}
and the references therein.

On the other hand, there are many {\it Tur\'{a}n-type extremal
problems,} i.e., given a forbidden graph $H$, determine the maximal
number of edges in a graph on $n$ vertices that does not contain a
copy of $H$.  It states that among n-vertex graphs not containing a
clique of size $t+1$, the complete $t$-partite graph $T_{n, t}$ with
(almost) equal parts, which is called {\it Tur\'{a}n graph,} has the
maximum number of edges. Spectral graph theory has similar Tur\'{a}n
extremal problems which determine the largest (or smallest)
eigenvalue of  a graph not containing a subgraph $H$.

 Nikiforov \cite{nikiforov2007} proved a spectral extremal
Tur\'{a}n theorem:  let $\lambda(G)$ be the largest eigenvalues of
the adjacency matrix of $G$ not containing  complete graph $K_t$ of
order $t$ as a subgraph, then $\lambda(G)\le \lambda(T_{n,t-1})$
with equality if and only if $G=T_{n, t-1}$;  the same result has
been proved previously by Guiduli \cite{guiduli1998},
  but his proof was made public only after the publication of \cite{nikiforov2007}.
Further,  Nikiforov explicitly advocated the study of general
Tur\'{a}n problems in many publications (see
\cite{nikiforov2007}-\cite{nikiforov2011}).  For instance, he
determined \cite{nikiforov2010-1} the maximum spectral radius of
graphs without paths of given length  and presented
\cite{nikiforov2011} a comprehensive survey on these topics.
  In addition, Sudakov et al. \cite{sudakov2005} presented a generalization of Tur\'{a}n
Theorem in terms of Laplacian eigenvalues.

 Motivated by these conjectures and Tur\'{a}n-type  extremal problems,  we
 investigate in
 this paper the extremal graphs  with maximal or minimal signless Laplacian
 spectral radius among all graphs of order $n$ with given clique
 number, which may be regarded as a part of spectral extremal theory.
The main result of this paper reads:
 \begin{theorem}\label{thm1}
 Let $G$ be a connected graph of order $n$ with  clique number
 $\omega\ge 2$. Then
 \begin{equation}\label{thm1-1}
 q_1(G)\le
 \frac{(3\omega-4)k+3r-2+\sqrt{k^2\omega^2+[(2r+4)\omega-8r]k+(r-2)^2}}{2},
\end{equation}
where $n=k\omega+r, 0\le r<\omega$.  Moreover,  equality holds in
(\ref{thm1-1}) if and only if $G$ is complete bipartite graph for
$\omega=2$ and   Tur\'{a}n graph $T_{n, \omega}$ for $\omega\ge 3$.
\end{theorem}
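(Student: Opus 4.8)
The plan is to split the argument into two independent tasks: first, to evaluate $q_1(T_{n,\omega})$ explicitly and verify that it equals the right-hand side of (\ref{thm1-1}); second, to show that the Tur\'{a}n graph maximizes $q_1$ over all connected $K_{\omega+1}$-free graphs on $n$ vertices. For the evaluation I would exploit that vertices in the same part of a complete multipartite graph are similar (they share a neighbourhood), so that partition is equitable and a Perron eigenvector of $Q$ may be taken constant on each part. Writing $a$ for its value on the $r$ parts of size $k+1$ and $b$ for its value on the $\omega-r$ parts of size $k$, the equation $Qf=q_1 f$ collapses to a $2\times 2$ system; eliminating $a,b$ and using $n=k\omega+r$ reduces it to $\alpha^2+(2-n)\alpha-2(\omega-r)k=0$ in the variable $\alpha:=q_1-n+2k$. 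Taking the larger root and substituting $q_1=\alpha+n-2k$ reproduces (\ref{thm1-1}) after simplifying the discriminant. The same computation shows $q_1(T_{n,\omega})>n$ for $\omega\ge 3$ and $q_1(T_{n,2})=n$, facts I will reuse.

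For extremality, fix a graph $G^{*}$ maximizing $q_1$ among connected $K_{\omega+1}$-free graphs, with Perron eigenvector $x$, and aim to prove $G^{*}$ is complete multipartite, i.e.\ that non-adjacency is transitive. Given non-adjacent $u,v$, set $C_t:=\sum_{w\sim t}(x_t+x_w)^2$, the contribution of the edges at $t$ to $x^{T}Qx$. Replacing the neighbourhood of $v$ by that of $u$ (so $v$ becomes a twin of $u$) and testing $Q$ against the vector agreeing with $x$ except at $v$, where it equals $x_u$, changes the Rayleigh quotient by an amount governed by the sign of $C_u-C_v-q_1(x_u^2-x_v^2)$; the symmetric operation on $u$ is governed by the opposite sign. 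Hence at least one of the two symmetrizations does not decrease $q_1$, while neither raises the clique number, since any clique through a duplicated vertex can be rerouted through its twin. Iterating makes non-adjacency an equivalence relation, so some complete multipartite graph attains $q_1(G^{*})$.

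The step I expect to be the main obstacle is precisely this symmetrization. Unlike the adjacency matrix, for $Q$ the one-sided duplication does not telescope cleanly, because of the vertex-degree terms and the $\sum_{w\sim\cdot}x_w^2$ contributions, so one cannot simply duplicate the endpoint of larger eigenvector weight and expect $q_1$ to grow. The device that rescues the argument is to carry both candidate moves simultaneously and observe that the two inequalities required are exactly complementary, so one of them always holds; care is then needed to track when it holds with equality in order to extract the equality characterization.

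Finally, among complete multipartite graphs with at most $\omega$ parts, I would first note that splitting any part of size $\ge 2$ adds edges and hence strictly increases $q_1$ for a connected graph by Perron--Frobenius, so the optimum has exactly $\omega$ parts. For parts $n_1,\dots,n_\omega$ the equitable reduction gives that $q_1$ is the largest root $q$ of $F(q):=\sum_{i=1}^{\omega}\frac{n_i}{q-n+2n_i}=1$; since $F$ is decreasing on $(n,\infty)$, it suffices to maximize $F$ pointwise over partitions. As $s\mapsto\frac{s}{q-n+2s}$ is strictly concave for $q>n$, the balanced (Tur\'{a}n) partition maximizes $\sum_i\frac{n_i}{q-n+2n_i}$, and strict concavity together with $q_1(T_{n,\omega})>n$ forces uniqueness when $\omega\ge 3$. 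For $\omega=2$ the relevant value is $q=n$, where $\frac{s}{q-n+2s}=\tfrac12$ is independent of $s$, so every bipartition ties at $q_1=n$; this is exactly why all complete bipartite graphs, not only the balanced one, are extremal. Combining the concavity equalities with the equality analysis of the symmetrization then yields the stated characterization.
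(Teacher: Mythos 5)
Your outer steps are sound and genuinely different from the paper's. The equitable-partition computation of $q_1(T_{n,\omega})$ (and, more generally, the characterization of $q_1$ of a complete multipartite graph as the largest root of $\sum_i n_i/(q-n+2n_i)=1$), together with the concavity/majorization argument that the balanced partition is the unique optimizer when $q_1>n$ and that all bipartitions tie at $q_1=n$, is correct and replaces the paper's appeal to known results (its Lemma~2.3 from \cite{cai2009},\cite{yu2012} and Corollary~2.4). Your complementarity observation is also correct: with the test vector $y$ equal to $x$ except $y_v=x_u$, the new Rayleigh quotient is $(q_1+C_u-C_v)/(1+x_u^2-x_v^2)$, and the two conditions governing the two one-sided duplications sum to exactly zero, so at least one duplication does not decrease $q_1$.

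The genuine gap is the sentence ``Iterating makes non-adjacency an equivalence relation.'' Since you work at a maximizer $G^{*}$, no admissible move can strictly increase $q_1$; in fact, if $C_u-C_v-q_1(x_u^2-x_v^2)\neq 0$ for some non-adjacent pair, one of the two moves would produce a graph with strictly larger Rayleigh quotient, contradicting maximality (after the easy connectivity repair), so at $G^{*}$ \emph{both} governing quantities vanish for every non-adjacent pair and every symmetrization merely produces another maximizer. Consequently there is no strictly increasing potential driving the iteration: the process can cycle (make $v$ a twin of $u$, later re-symmetrize $v$ or $u$ against a third vertex and destroy the first twin relation), which is exactly the failure mode of naive pairwise Zykov symmetrization; in the classical edge-count proof termination is restored by the triple argument ($u\not\sim v$, $v\not\sim w$, $u\sim w$, duplicate $v$ onto both ends), and you exhibit no spectral analogue of it. The paper sidesteps this entirely by organizing the symmetrization globally: it takes the vertex $u$ of maximum weight $w_G(u)=q_1f(u)$ and duplicates \emph{all} non-neighbors of $u$ onto $u$ in a single step (Lemma~2.1), obtaining $\overline{K_p}\bigtriangledown H$ with $H$ being $K_t$-free, and then recurses inside $H$ via a join version of the same lemma (Lemma~2.2); one independent part is peeled off per step, so the procedure terminates after at most $\omega$ rounds by construction, and the max-weight choice (which gives $f(u)\ge f(v)$ and $q_1>d_G(u)$) makes the one-sided duplication work without your complementarity device. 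A second, smaller deficiency: the theorem is an ``if and only if,'' and your plan only gestures at the equality analysis, whereas the paper's lemmas each carry the statement ``equality iff $G=G_1$,'' which is what upgrades ``some maximizer is $T_{n,\omega}$'' to ``every extremal graph is $T_{n,\omega}$'' for $\omega\ge 3$. To salvage your route you would need either a strictly monotone potential for the pairwise moves or a reorganization into the peel-off scheme — at which point you have essentially reconstructed the paper's proof.
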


\section{Proof of Theorem~\ref{thm1}}
In order to prove Theorem~\ref{thm1}, we need more notation and
preliminary results. Two vertices in a graph $G$ are called {\it
duplicate} if they have precisely the same neighbors. In the
operation of {\it duplication of a vertex $u$ to a vertex $v$,} all
edges incident to vertex $u$ are deleted and then  all edges between
$u$ and vertices adjacent to $v$ are added. Clearly, after this
duplication, $u$ and $v$ are duplicate, and duplication in a graph
not containing  $K_{t+1}$ preserves this property.  The {\it
complement} $\overline{G}$ of a simple graph $G=(V(G), E(G))$ is the
simple graph with vertex set $V(G)$, two vertices being adjacent in
$\overline{G}$ if and only if they are not adjacent in $G$. For two
disjoint graphs $ G$ and $H$, the {\it union} $G+H$ of $G$ and $H$
is the graph with vertex set $V(G)\bigcup V(H)$ and edge set
$E(G)\bigcup E(H)$; the {\it join} $G\bigtriangledown H$ of $G$ and
$H$ is the graph obtained from the union $G+H$ by joining each
vertex of $G$ to each vertex of $H$.

Let $f$ be a nonnegative function on vertex set $V(G)$ of $G$. Then
$f$ is also regarded as a nonnegative vector corresponding to vertex
set $V(G)$. For any vertex $u$, the {\it weight} $w_G(u)$ of vertex
$u$ with respect to $f$ in $G$ is defined as
\begin{equation}\label{weight-def}
w_G(u)=Q(G)f(u)=\sum_{v\sim u}(f(u)+f(v)) \end{equation}
 if $u$ is
not isolated vertex and $0$ otherwise, where $"\sim"$ stands for
adjacency relation.
 We adapt a variation of Zykov's proof
 of Tur\'{a}n's theorem, which was also used in \cite{guiduli1998}.
\begin{lemma}\label{lemma2-1}
Let $G$ be a simple  graph of order $n$  not containing  $K_{t+1}$.
Then there exists a simple graph $G_1$ such that $G_1=\overline{K_p}
\bigtriangledown  H$ and $q_1(G)\le q_1(G_1)$, where $H$ is a simple
graph not containing  $K_{t}$ and $1\le p\le n-1.$ Moreover,
equality holds if and only if $G=G_1$.
\end{lemma}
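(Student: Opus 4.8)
The plan is to combine the Rayleigh quotient for the signless Laplacian with a Zykov-type symmetrization. By Perron--Frobenius, $Q(G)$ has a nonnegative eigenvector $f$ for $q_1$, normalized so that $\sum_x f(x)^2=1$, and
\[ q_1=f^{T}Q(G)f=\sum_{xy\in E(G)}(f(x)+f(y))^2=\max_{h\ge 0,\ \|h\|=1}\sum_{xy\in E(G)}(h(x)+h(y))^2. \]
Here the weight satisfies $w_G(x)=(Q(G)f)(x)=q_1 f(x)$, and since $Q$ is monotone under edge addition we have $q_1\ge\Delta(G)+1>d(x)$ for every vertex $x$ (we may assume $G$ has an edge, else there is nothing to prove). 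First I would settle the structural side: if $u$ is a vertex and we duplicate every non-neighbour of $u$ onto $u$, then all vertices of $S:=\{u\}\cup\{v:v\not\sim u\}$ acquire the common neighbourhood $N(u)$, so $S$ becomes an independent set completely joined to $R:=N(u)$; the resulting graph is therefore $\overline{K_{p}}\bigtriangledown H$ with $p=|S|$ and $H=G[R]$. Duplication preserves $K_{t+1}$-freeness, and since $u$ is adjacent to all of $R$, a $K_{t}$ inside $H$ together with $u$ would create a $K_{t+1}$; hence $H$ is $K_{t}$-free, and the presence of an edge forces $R\neq\emptyset$, so $1\le p\le n-1$.

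The analytic heart is monotonicity of $q_1$ under one duplication, and the key is a cancellation. Take non-adjacent, non-duplicate vertices $u,v$ with $a=f(u)$, $b=f(v)$. For the graph $G_A$ obtained by giving $v$ the neighbourhood $N(u)$ (so $u,v$ become adjacent to exactly $R=N(u)$), test with the vector $g$ equal to $f$ off $\{u,v\}$ and equal to a common value $c$ on $u,v$. Using $\sum_{w\in N(u)}f(w)=a(q_1-d(u))$, the quantity $g^{T}Q(G_A)g-q_1\|g\|^2$ is a downward-opening quadratic in $c$ (leading coefficient $2(d(u)-q_1)<0$) whose maximum, attained at $c=a$, equals
\[ M_A=q_1(a^2-b^2)-d(u)a^2+d(v)b^2+\sum_{w\in N(u)}f(w)^2-\sum_{w\in N(v)}f(w)^2. \]
Running the symmetric computation for $G_B$ (giving $u$ the neighbourhood $N(v)$) yields exactly $M_B=-M_A$. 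Hence $\max\{M_A,M_B\}\ge 0$, so at least one of the two duplications admits a value $c$ with $g^{T}Q(G_\bullet)g\ge q_1\|g\|^2$; that duplication does not decrease $q_1$.

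With this engine I would run Zykov's symmetrization: while the current graph has two non-adjacent vertices with distinct neighbourhoods, duplicate one onto the other in the non-decreasing direction supplied above, making that pair duplicate. Iterating drives "non-adjacency" toward an equivalence relation, so the process terminates at a complete multipartite graph $G_1$, which is precisely of the form $\overline{K_{p}}\bigtriangledown H$ (take one colour class as the $\overline{K_p}$ and the join of the remaining classes as $H$); $K_{t+1}$-freeness caps the number of classes at $t$ and makes $H$ be $K_t$-free, while $q_1(G)\le q_1(G_1)$ follows by telescoping the steps. I expect the main obstacle to be exactly this orchestration: ensuring that the direction forced by the sign of $M_A$ stays compatible with progress toward the multipartite form and that the procedure terminates (a potential/counting argument on the neighbourhood classes), rather than the single-step inequality itself, which the cancellation $M_B=-M_A$ handles cleanly.

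Finally, for the equality clause I would track when every step is tight: $M_A=M_B=0$ at each duplication forces $f$ to be constant on each absorbed set and the competing neighbourhood sums to coincide, which backs out to $G$ already being of the form $\overline{K_p}\bigtriangledown H$, i.e. $G=G_1$.
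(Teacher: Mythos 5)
Your single-step engine is correct, and it is a genuinely different mechanism from the paper's. Writing $\phi(x)=(q_1-d(x))f(x)^2+\sum_{w\sim x}f(w)^2$, your computation (which checks out, using $\sum_{w\in N(u)}f(w)=(q_1-d(u))f(u)$) gives $M_A=\phi(u)-\phi(v)$ and $M_B=-M_A$, so for any non-adjacent pair at least one of the two duplications does not decrease $q_1$. The paper never needs this two-sided cancellation: it fixes the vertex $u$ of maximum weight $w_G(u)=q_1f(u)$ (equivalently, maximum $f$-value), duplicates \emph{all} non-neighbours of $u$ onto $u$ in a single batch, and uses the \emph{unmodified} eigenvector $f$ as test vector, verifying the pointwise inequality $w_{G_1}(x)\ge w_G(x)$: trivially for $x\in N(u)$ (edges are only added), and for $x\in V_2$ via the identity $w_{G_1}(x)-w_G(x)=(q_1-d_G(u))(f(u)-f(x))\ge 0$. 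One batch step already produces $\overline{K_p}\bigtriangledown H$, and the equality clause is read off from which pointwise inequalities are tight.

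The genuine gap is exactly the one you flag and then leave open: the orchestration. The lemma needs only one layer of the join, but your scheme insists on reaching a complete multipartite graph through pairwise steps whose direction is \emph{forced} on you by the sign of $M_A$, and no termination argument is given. This is a real obstruction, not a formality: making $u,v$ duplicates can destroy an earlier duplicate pair (if $v$ was a duplicate of $x$ and now receives $N(u)\neq N(x)$), so the number of duplicate pairs is not monotone; the forced direction need not increase the edge count or any other potential you name; and monotonicity of $q_1$ only forbids cycles along which $q_1$ strictly increases, so cycling among graphs of equal $q_1$ is not excluded. Consequently the proposal, as written, does not reach the conclusion, and the one-sentence equality discussion (tightness "at each duplication" of an unbounded process) does not prove the "equality iff $G=G_1$" clause. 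The repair is already inside your formula: pick $u$ maximizing $\phi$, duplicate all of $V_2=V(G)\setminus(N(u)\cup\{u\})$ onto $u$ simultaneously, and test with $g$ equal to $f$ off $V_2$ and equal to $f(u)$ on $\{u\}\cup V_2$. Since every edge meeting $V_2$ appears in at most two of the sums $\sum_{w\in N(v)}(f(v)+f(w))^2$, $v\in V_2$, one gets
$$g^{T}Q(G_1)g-q_1\|g\|^2\ \ge\ \sum_{v\in V_2}\bigl(\phi(u)-\phi(v)\bigr)\ \ge\ 0,$$
which proves the inequality in a single step with no iteration at all; this is the precise analogue of the paper's batch argument, with $\phi$-maximality playing the role of weight-maximality, and the equality analysis can then be carried out step by step as in the paper.
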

\begin{proof} Without loss of generality, we assume that $G$ is
connected, since $q_1(G)$ is an increasing function with respect to
adding edges.  Let $f$ be the unit positive eigenvector on $V(G)$
corresponding to the eigenvalue $q_1(G)$ of $G$ such that
$q_1(G)=\langle f, \ Q(G)f \rangle$ and $\langle f,\ f\rangle =1$.
 Let $w_G(u)=\max\{w_G(v):\ v\in V(G)\}$. Denote by $V_1$
the set of all neighbors of vertex $u$ and $V_2=V(G)\backslash
(V_1\bigcup\{u\})$ with $|V_2|=p-1$.  Now we construct a new graph
$G_1$ obtained from $G$ by a duplication of
 each vertex $v\in V_2$ to vertex $u$.
 Then  $G_1$ still does not contain
 $K_{t+1}$ and the induced subgraph $G[V_1]$ does not  contain $K_{t}$.
 Hence $G_1$ can be written as $G_1=\overline{K_p}\bigtriangledown H$, where
 $H=G[V_1]$ does not contain $K_{t}$.

  On the other hand, for any $v\in V_1$,
  any vertex
 adjacent to $v$ in $G$ must be adjacent to $v$ in $G_1$. Then
 \begin{equation}\label{lemma2-1-1}
 w_{G}(v)=\sum_{x v\in E(G)}(f(v)+f(x))\le \sum_{xv\in
 E(G_1)}(f(v)+f(x))=w_{G_1}(v)
\end{equation}
with equality  holding if and only if $v$ is adjacent to  each
vertex in $V_2$ in $G$.
 For any vertex $v\in V_2$,
 \begin{equation}\label{lemma2-1-2}q_1(G)f(u)=Q(G)f(u)=w_G(u)\ge
 w_G(v)=Q(G)f(v)=q_1(G)f(v).
 \end{equation}
 Hence $f(u)\ge f(v)$.  In addition,
$$w_G(u)=q_1(G)f(u)=\sum_{xu\in E(G)}(f(u)+f(x))=d_G(u)f(u)+\sum_{x\in
V_1}f(x)$$ and
 $q_1(G)>d_G(v)$ for any $v\in
 V(G)$ (see \cite{Oliveira2010}). Therefore, for any $v\in V_2$,
 \begin{eqnarray*}
 w_{G_1}(v) &=& \sum_{xv\in E(G_1)}(f(v)+f(x))=d_{G_1}(v)f(v)+\sum_{x\in V_1}f(x)\\
  &=& d_G(u)f(v)+w_G(u)-d_G(u)f(u)\\
  &=&w_G(v)+(w_G(u)-w_G(v))-d_G(u)(f(u)-f(v))\\
  &=&w_G(v)+(q_1(G)-d_G(v))(f(u)-f(v))\\
  &\ge& w_G(v).
 \end{eqnarray*}
Moreover, $w_{G_1}(u)=w_G(u)$. Then
 $$ \langle f, Q(G_1)f  \rangle=\sum_{v\in V(G_1)}f(v)w_{G_1}(v)\ge \sum_{v\in V(G)}f(v)w_G(v)=q_1(G).$$
 Hence by Rayleigh quotient, $q_1(G_1)\ge q_1(G)$ with equality holding if and only if
 $w_{G_1}(v)=w_{G}(v)$ for all $v\in V(G)=V(G_1)$, which implies $G=G_1$.
  \end{proof}

\begin{lemma}\label{lemma21}
Let $G$ be a simple graph not containing $K_{t+1}$ and $R$ be
another simple  graph. Then there exists a simple graph $G_1$ such
that $G_1=\overline{K_p}\bigtriangledown H$ and
$q_1(R\bigtriangledown G)\le q_1(R\bigtriangledown
G_1)=q_1(R\bigtriangledown \overline{K_p}\bigtriangledown H)$, where
$H$ is a simple graph not containing $K_{t}$ and $1\le p\le n-1.$
\end{lemma}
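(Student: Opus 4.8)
The plan is to adapt the duplication argument of Lemma~\ref{lemma2-1}, now carried out inside the $G$-part of the join $R\bigtriangledown G$ rather than on a stand-alone graph. Since the join of a nonempty graph with any graph is connected, I would first let $f$ be the unit positive Perron eigenvector of $Q(R\bigtriangledown G)$, so that $q_1(R\bigtriangledown G)=\langle f, Q(R\bigtriangledown G)f\rangle$. The crucial change is that I choose $u$ to be a vertex of maximum weight \emph{among the vertices of $G$ only}, i.e. $w_{R\bigtriangledown G}(u)=\max\{w_{R\bigtriangledown G}(v):v\in V(G)\}$, because the duplication will be confined to $V(G)$. Setting $V_1$ to be the set of $G$-neighbors of $u$ and $V_2=V(G)\setminus(V_1\cup\{u\})$ with $|V_2|=p-1$, I construct $G_1$ from $G$ by duplicating each $v\in V_2$ to $u$ inside $G$; every vertex of $V(G)$ remains adjacent to all of $V(R)$, so this operation modifies only edges internal to $G$.

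Next I would record the structural consequence exactly as in Lemma~\ref{lemma2-1}: after the duplication the vertices $\{u\}\cup V_2$ form an independent set each of whose members is joined to every vertex of $H:=G[V_1]$, so $G_1=\overline{K_p}\bigtriangledown H$. Because duplication preserves the absence of $K_{t+1}$ and $\overline{K_p}$ contributes an independent set joined to $H$, the graph $H$ cannot contain $K_t$. Hence $R\bigtriangledown G_1=R\bigtriangledown\overline{K_p}\bigtriangledown H$ has the claimed form, which also furnishes the stated equality on the right.

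The heart of the argument is the weight comparison, which I would split into three cases using the \emph{same} vector $f$ as a test vector for $Q(R\bigtriangledown G_1)$. For a vertex $r\in V(R)$ the neighborhood is unchanged by the duplication, so $w_{R\bigtriangledown G_1}(r)=w_{R\bigtriangledown G}(r)$. For $v\in V_1$ the duplication can only add edges (to the copies in $V_2$), whence $w_{R\bigtriangledown G_1}(v)\ge w_{R\bigtriangledown G}(v)$. For $v\in V_2$ the new neighborhood of $v$ equals that of $u$, namely $V(R)\cup V_1$, so the degree of $v$ in $R\bigtriangledown G_1$ equals the old degree of $u$ in $R\bigtriangledown G$; repeating the computation of Lemma~\ref{lemma2-1} gives
\[
w_{R\bigtriangledown G_1}(v)=w_{R\bigtriangledown G}(v)+\bigl(q_1(R\bigtriangledown G)-d(v)\bigr)\bigl(f(u)-f(v)\bigr)\ge w_{R\bigtriangledown G}(v),
\]
where $f(u)\ge f(v)$ follows from $q_1(R\bigtriangledown G)f(u)=w_{R\bigtriangledown G}(u)\ge w_{R\bigtriangledown G}(v)=q_1(R\bigtriangledown G)f(v)$, and $q_1(R\bigtriangledown G)>d(v)$ by the degree bound for the signless Laplacian. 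Summing, $\langle f, Q(R\bigtriangledown G_1)f\rangle=\sum_{v}f(v)\,w_{R\bigtriangledown G_1}(v)\ge\sum_{v}f(v)\,w_{R\bigtriangledown G}(v)=q_1(R\bigtriangledown G)$, and the Rayleigh quotient yields $q_1(R\bigtriangledown G_1)\ge q_1(R\bigtriangledown G)$.

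I expect the main obstacle to be bookkeeping rather than conceptual: one must be careful that the maximum defining $u$ is taken over $V(G)$ and not over all of $R\bigtriangledown G$, since only this guarantees $f(u)\ge f(v)$ for the duplicated vertices $v\in V_2$, and one must verify that confining the duplication to $V(G)$ genuinely leaves every $R$-vertex's weight untouched. The degree inequality $q_1(R\bigtriangledown G)>d(v)$ for $v\in V(G)$, quoted from \cite{Oliveira2010}, is precisely what makes the $V_2$-term nonnegative, so I would confirm that it applies to the connected join. Unlike Lemma~\ref{lemma2-1}, no equality characterization is demanded here, which removes the need to track when each of these inequalities is tight.
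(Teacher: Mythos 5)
Your proposal is correct and follows essentially the same route as the paper's proof: the same Perron eigenvector $f$, the same choice of $u$ maximizing the weight over $V(G)$ only, the same duplication of $V_2$ onto $u$, the same three-case weight comparison (with your coefficient $q_1(R\bigtriangledown G)-d(v)$, where $d(v)$ is the post-duplication degree, matching the paper's $q_1(R\bigtriangledown G)-(|V(R)|+d_G(u))$), and the same Rayleigh-quotient conclusion. The only cosmetic difference is that the paper justifies $q_1(R\bigtriangledown G)>|V(R)|+d_G(u)$ via Perron--Frobenius rather than by citing the degree bound, and it additionally records the equality case $G=G_1$ (used later in Theorem~\ref{thm1}) even though the lemma statement does not demand it.
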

\begin{proof}
Let $f$ be unit positive eigenvector of $Q(R\bigtriangledown G)$
corresponding to $q_1(R\bigtriangledown G)$. Let
$$w_{R\bigtriangledown
 G}(v)\equiv Q(R\bigtriangledown G)f(v)=\sum_{xv\in E(R\bigtriangledown G)}(f(v)+f(x))
=q_1(R\bigtriangledown G)f(v)
 $$
for any $v\in V(R\bigtriangledown G)$, and let $w_{R\bigtriangledown
G}(u)=\max\{w_{R\bigtriangledown
 G}(v):\ v\in V(G)\}$. Therefore, as in (\ref{lemma2-1-2}),
 $$q_1(R\bigtriangledown G)f(u)=\sum_{xu\in E(R\bigtriangledown  G)}(f(u)+f(x))
 =w_{R\bigtriangledown
G}(u)\ge w_{R\bigtriangledown G}(v)= q_1(R
 \bigtriangledown
 G)f(v)$$ which implies  $f(u)\ge f(v)$ for any $v\in V(G)$.
Denote by $V_1$ the set of all neighbors of vertex $u$ in $V(G)$ and
$V_2=V(G)\backslash (V_1\bigcup\{u\})$ with $|V_2|=p-1$. Now we
construct a new graph $G_1$ obtained from $G$ by a duplicatation of
 each vertex $v\in V_2$ to vertex $u$.
 Then  $G_1$ still does not contain
 $K_{t+1}$ and the induced subgraph $G[V_1]$ does not contain $K_{t}$.
 Hence $G_1$ can be written as $G_1=\overline{K_p}\bigtriangledown H$, where
 $H=G[V_1]$ does not contain $K_{t}$.

Clearly, for any vertex  $v\in V(R)$, $w_{R\bigtriangledown
G_1}(v)=w_{R\bigtriangledown  G}(v)$.  For any vertex $v\in V_1$,
any neighbor of $v$ in $R\bigtriangledown  G$ is also its neighbor
in $R\bigtriangledown  G_1$. So for any $v\in V_1$,
$w_{R\bigtriangledown G_1}(v)\ge w_{R\bigtriangledown  G}(v)$ with
equality if and only if $v$ is adjacent to each vertex in $V_2$ in
$G$. For any vertex $v\in V_2$, by simple calculations, as in the
previous proof,  we have
\begin{eqnarray*} w_{R\bigtriangledown  G_1}(v) &=&
\sum_{xv\in E(G_1)}(f(v)+f(x))+\sum_{x\in V(R)}(f(v)+f(x))\\
&=&\sum_{x\in V_1}((f(u)+f(x))-(f(u)-f(v)))+\sum_{x\in
V(R)}((f(u)+f(x))-(f(u)-f(v)))\\
&=&\sum_{x\in V_1}(f(u)+f(x))+\sum_{x\in
V(R)}(f(u)+f(x))-(|V_1|+|V(R)|)(f(u)-f(v))\\
&=&w_{R\bigtriangledown G}(u)-(|V(R)|+d_G(u))(f(u)-f(v))\\
&=&w_{R\bigtriangledown G}(v)+(w_{R\bigtriangledown
G}(u)-w_{R\bigtriangledown G}(v))-(|V(R)|+d_G(u))(f(u)-f(v))\\
&=&w_{R\bigtriangledown G}(v)+(q_1(R\bigtriangledown
G)f(u)-q_1(R\bigtriangledown G)f(v))-(|V(R)|+d_G(u))(f(u)-f(v))\\
 &=&w_{R\bigtriangledown  G}(v)+(q_1(R\bigtriangledown
G)-(|V(R)|+d_G(u)))(f(u)-f(v))\\
&\ge & w_{R\bigtriangledown G}(v),
\end{eqnarray*}
since  $q_1(R\bigtriangledown G)>|V(R)|+d_G(u)$ by Perron-Frobenius
theorem. Moreover, $w_{R\bigtriangledown
G_1}(u)=w_{R\bigtriangledown G}(u)$. Therefore
\begin{eqnarray*}
q_1(R\bigtriangledown G)&=&  \langle f,\ Q(R\bigtriangledown G)f
\rangle =\sum_{v\in V(R\bigtriangledown G)}f(v)
w_{R\bigtriangledown G}(v)\\
& \le& \sum_{v\in V(R\bigtriangledown G_1)}f(v) w_{R\bigtriangledown
G_1}(v)=  \langle f,\ Q(R\bigtriangledown
  G_1)f \rangle\\
  &\le&  q_1(R\bigtriangledown G_1).
  \end{eqnarray*}
Moreover, equality holds if and only if $f$ is an eigenvector of
$Q(R \bigtriangledown G_1)$, which implies $w_{R\bigtriangledown
G}(v)=w_{R\bigtriangledown  G_1}(v) $ for all $v\in
V(R\bigtriangledown  G)$. So equality holds if and only if $G=G_1$.
Hence the assertion holds.
\end{proof}

  The following lemma  on the signless Laplacian spectral radius
  of  a complete $t-$partite graph is well-known (see, for example,
   \cite{cai2009} or \cite{yu2012}).

\begin{lemma}(\cite{cai2009},\cite{yu2012})\label{lemma2-2}
Let $G$ be a complete $t-$partite graph on $n$ vertices. If $n=tk+r,
0\le r< t$, then $q_1(G)=n$ for $t=2$, and
 \begin{equation}\label{lemma2-2-1}
 q_1(G)\le \frac{(3t-4)k+3r-2+\sqrt{t^2k^2+[(2r+4)t-8r]k+(r-2)^2}}{2}
\end{equation}
for $t\ge 3$. Moreover, equality holds if and only if $G$ is
Tur\'{a}n graph $T_{n, t}$ which is the complete $t$-partite graph
on $n$ vertices in which the partite sets are  of size  $k$ or
$k+1$.
\end{lemma}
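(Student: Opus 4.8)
\medskip\noindent
The plan is to express $q_1(G)$ as the largest root of an explicit scalar equation attached to the colour classes of $G$, and then to prove that among all complete $t$-partite graphs on $n$ vertices this root is largest precisely for the balanced (Tur\'{a}n) partition.

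First I would exploit the equitable partition of $G$ into its colour classes $V_1,\dots,V_t$ of sizes $n_1,\dots,n_t$. A vertex of $V_i$ has degree $n-n_i$, so $Q(G)$ acts on vectors that are constant on each class through the $t\times t$ quotient matrix $B$ with $B_{ii}=n-n_i$ and $B_{ij}=n_j$ for $i\neq j$. Since $G$ is connected and every permutation of vertices inside a class is an automorphism, Perron--Frobenius forces the positive eigenvector of $Q(G)$ to be constant on each $V_i$; hence $q_1(G)$ is the spectral radius of $B$. Writing an eigenvalue of $B$ as $\lambda=n+\mu$ and solving $Bx=\lambda x$ coordinatewise gives $x_i=S/(\mu+2n_i)$ with $S=\sum_j n_jx_j$, so the relevant eigenvalues are the roots of
$$\phi(\mu):=\sum_{i=1}^{t}\frac{n_i}{\mu+2n_i}=1 .$$
On the branch $\mu>-2\min_i n_i$ (the only branch on which the eigenvector $x$ is positive) the function $\phi$ decreases strictly from $+\infty$ to $0$, and $\phi(0)=t/2$. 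Hence the Perron value is $\mu=0$ when $t=2$, giving $q_1(G)=n$ independently of the class sizes, and is $\mu^{\ast}>0$ when $t\ge 3$, giving $q_1(G)=n+\mu^{\ast}$ with $\mu^{\ast}$ the unique root of $\phi=1$ in $(0,\infty)$.

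Next, for $t\ge 3$ I would regard $\mu^{\ast}$ as a function of the class sizes and show that a rebalancing step strictly increases it. Suppose $n_i\ge n_j+2$ and replace the pair $(n_i,n_j)$ by $(n_i-1,n_j+1)$, keeping all other sizes fixed; write $\widetilde{\phi}$ for the new secular function. Evaluating at the old root $\mu^{\ast}$ and setting $\psi(m)=m/(\mu^{\ast}+2m)$, the change is
$$\widetilde{\phi}(\mu^{\ast})-1=\bigl(\psi(n_j+1)-\psi(n_j)\bigr)-\bigl(\psi(n_i)-\psi(n_i-1)\bigr).$$
Since $\psi''(m)=-4\mu^{\ast}/(\mu^{\ast}+2m)^{3}<0$ for $\mu^{\ast}>0$, the map $\psi$ is strictly concave, so its consecutive unit increments strictly decrease; as $n_j<n_i-1$ this makes the displayed quantity positive. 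Thus $\widetilde{\phi}(\mu^{\ast})>1$, and because $\widetilde{\phi}$ is again strictly decreasing on the Perron branch, its root exceeds $\mu^{\ast}$, i.e. $q_1$ strictly increases. Iterating until no two classes differ in size by more than one shows that $q_1$ is maximized, and only, by the balanced partition, namely the Tur\'{a}n graph $T_{n,t}$; this yields both the inequality (\ref{lemma2-2-1}) and its equality case.

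Finally I would evaluate the bound at $T_{n,t}$. With $n=tk+r$ there are $r$ classes of size $k+1$ and $t-r$ of size $k$, so the secular equation collapses to
$$\frac{r(k+1)}{\mu+2(k+1)}+\frac{(t-r)k}{\mu+2k}=1,$$
a quadratic; clearing denominators gives $\mu^{2}+\bigl(2(2k+1)-n\bigr)\mu+2k(k+1)(2-t)=0$, and taking the larger root and adding $n$ produces $q_1(T_{n,t})=\frac{1}{2}\bigl(3n-2(2k+1)+\sqrt{D}\,\bigr)$. A direct simplification, using $3n-2(2k+1)=(3t-4)k+3r-2$ and $D=t^{2}k^{2}+[(2r+4)t-8r]k+(r-2)^{2}$, reproduces (\ref{lemma2-2-1}) verbatim. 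I expect the genuine obstacle to be the monotonicity argument above: one must know that the Perron root lies on the branch where $\phi$ is strictly decreasing and that raising the value of the secular function there strictly raises its root --- which is exactly where $\mu^{\ast}>0$ (valid only for $t\ge 3$) is indispensable. The closed-form evaluation and the matching of the discriminant $D$ are routine, but must be carried out carefully to land on the stated coefficients.
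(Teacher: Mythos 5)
Your proposal is correct, but a direct comparison with the paper is moot here: the paper does not prove Lemma~2.2 at all --- it imports the statement from the cited works of Cai--Fan \cite{cai2009} and Yu--Wu--Shu \cite{yu2012}. What you have written is therefore a self-contained derivation of a result the paper treats as known. I checked the substance. The equitable-partition reduction is sound: for a connected complete $t$-partite graph $Q(G)$ is irreducible, every permutation within a colour class is an automorphism, so by uniqueness of the Perron vector it is constant on classes and $q_1(G)$ is the Perron root of the quotient matrix $B$ with $B_{ii}=n-n_i$, $B_{ij}=n_j$; the substitution $\lambda=n+\mu$ correctly yields $x_i=S/(\mu+2n_i)$ and the secular equation $\sum_i n_i/(\mu+2n_i)=1$, whose unique root on the positive-eigenvector branch $\mu>-2\min_i n_i$ is the Perron value (eigenvalues with $S=0$ force sign changes, so they cannot be Perron). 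The dichotomy $\phi(0)=t/2$ giving $\mu^{\ast}=0$ for $t=2$ and $\mu^{\ast}>0$ for $t\ge 3$ is right, and it correctly recovers $q_1=n$ for every complete bipartite graph. The rebalancing step is also right: with $\psi(m)=m/(\mu^{\ast}+2m)$ one has $\psi''<0$ precisely because $\mu^{\ast}>0$, so for $n_j\le n_i-2$ the increment $\psi(n_j+1)-\psi(n_j)$ strictly exceeds $\psi(n_i)-\psi(n_i-1)$, forcing $\widetilde{\phi}(\mu^{\ast})>1$ and hence a strictly larger root; iterating (e.g.\ $\sum n_i^2$ strictly decreases, so the process terminates) gives the maximum uniquely at the balanced partition, which settles the equality case. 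I also verified the closed form: clearing denominators gives $\mu^2+(4k+2-n)\mu+2k(k+1)(2-t)=0$, and with $n=tk+r$ one checks $3n-4k-2=(3t-4)k+3r-2$ and $((t-4)k+r-2)^2+8k(k+1)(t-2)=t^2k^2+[(2r+4)t-8r]k+(r-2)^2$, exactly the stated bound (the case $r=0$ introduces a spurious root $\mu=-2(k+1)$ from the cleared factor, but the larger root $(t-2)k$ is the correct one). In short: a complete and correct proof, supplied by a quotient-matrix and concavity argument where the paper merely cites the literature.
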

\begin{corollary}\label{cory4} For any $2\le t\le n-1$,
$$q_1(T_{n, t})< q_1(T_{n, t+1}).$$
\end{corollary}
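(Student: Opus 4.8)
The plan is to avoid comparing the closed-form expressions in Lemma~\ref{lemma2-2} head-on: passing from $t$ to $t+1$ also changes the quotient $k$ and remainder $r$ in $n = tk+r$, so a direct manipulation of the radical is unpleasant. Instead I would exploit the maximality already encoded in Lemma~\ref{lemma2-2}, namely that among all complete $(t+1)$-partite graphs on $n$ vertices the Tur\'an graph $T_{n,t+1}$ attains the largest signless Laplacian spectral radius. It then suffices to exhibit \emph{one} complete $(t+1)$-partite graph $G'$ on $n$ vertices with $q_1(G') > q_1(T_{n,t})$, for then the chain $q_1(T_{n,t}) < q_1(G') \le q_1(T_{n,t+1})$ closes the argument.

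First I would construct such a $G'$ by refining the partition of $T_{n,t}$. Since $2 \le t \le n-1$, the $t$ balanced parts of $T_{n,t}$ cannot all be singletons, so some part $P$ has $|P| \ge 2$. Splitting $P$ into two nonempty sets $P_1, P_2$ while leaving the other $t-1$ parts untouched yields a complete $(t+1)$-partite graph $G'$ on the same vertex set. Every edge of $T_{n,t}$ persists in $G'$, because vertices lying in distinct original parts still lie in distinct parts of $G'$; in addition, each pair with one endpoint in $P_1$ and the other in $P_2$—a non-edge inside $P$ in $T_{n,t}$—now becomes an edge. Thus $T_{n,t}$ is a proper spanning subgraph of $G'$, with exactly $|P_1|\,|P_2| \ge 1$ edges added.

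Next I would promote edge-monotonicity of $q_1$ to a strict inequality. Writing $A = Q(T_{n,t})$ and $B = Q(G')$, both matrices are entrywise nonnegative with $0 \le A \le B$ and $A \ne B$ (adding the edge $\{x,y\}$ raises the diagonal entries $(x,x),(y,y)$ and the off-diagonal entries $(x,y),(y,x)$). Since $t \ge 2$ the graph $T_{n,t}$ is connected, so $A$ is irreducible, and the Perron--Frobenius theorem then gives $q_1(T_{n,t}) = \rho(A) < \rho(B) = q_1(G')$. Applying Lemma~\ref{lemma2-2} to the complete $(t+1)$-partite graph $G'$—legitimate because $t+1 \ge 3$—yields $q_1(G') \le q_1(T_{n,t+1})$, and combining the two inequalities proves the corollary.

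The bookkeeping points, namely the existence of a part of size at least $2$ and the fact that splitting genuinely creates an edge, are routine. The one step that warrants care is the strict inequality $q_1(T_{n,t}) < q_1(G')$: entrywise monotonicity alone would deliver only the weak bound $q_1(T_{n,t}) \le q_1(G')$, so I must lean on the irreducibility of $Q(T_{n,t})$ to obtain the strict Perron root comparison. Since the paper already uses that $q_1$ is increasing under edge addition (in the proof of Lemma~\ref{lemma2-1}), upgrading this to the strict form via Perron--Frobenius is the only genuinely new ingredient.
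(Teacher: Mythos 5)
Your proof is correct, but it takes a genuinely different route from the paper. The paper's entire proof of Corollary~\ref{cory4} is the single sentence ``It follows directly from Lemma~\ref{lemma2-2} with a simple calculation,'' i.e.\ an (omitted) algebraic comparison of the two closed-form expressions --- exactly the head-on manipulation you chose to avoid, and which is indeed unpleasant because the quotient and remainder in $n=tk+r$ both change when $t$ becomes $t+1$. Your argument replaces that algebra with structure: split a part of $T_{n,t}$ of size at least two (such a part exists since $t\le n-1$) to get a complete $(t+1)$-partite graph $G'$ containing $T_{n,t}$ as a proper spanning subgraph; use the strict Perron--Frobenius comparison $0\le Q(T_{n,t})\le Q(G')$, $Q(T_{n,t})\ne Q(G')$, with irreducibility (note that irreducibility of the smaller matrix passes to the larger one, since the zero pattern can only shrink, so your invocation is sound --- and in fact $G'$ is itself connected) to get $q_1(T_{n,t})<q_1(G')$; then invoke only the \emph{extremal characterization} in Lemma~\ref{lemma2-2}, namely $q_1(G')\le q_1(T_{n,t+1})$ for the complete $(t+1)$-partite graph $G'$ with $t+1\ge 3$. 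What your route buys is rigor and economy: it never touches the radical, it supplies the strictness explicitly (via Perron--Frobenius) where the paper leaves it to an unverified calculation, and it uses the same toolkit the paper already relies on elsewhere (edge-monotonicity of $q_1$ in Lemma~\ref{lemma2-1}, maximality of the Tur\'an graph in Lemma~\ref{lemma2-2}). What the paper's route would buy, if the calculation were actually carried out, is a self-contained numerical verification independent of any graph construction; as written, however, it is an assertion rather than a proof, and your argument fills that gap.
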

\begin{proof}
It follows directly from Lemma~\ref{lemma2-2} with a simple
calculation.
\end{proof}

 Now we are ready to present a proof of Theorem~\ref{thm1}.

\begin{proof}
Let $\omega(G)=t$ and $f$ be the positive  eigenvector of $Q(G)$
corresponding to $q_1(G)$ with $ \langle f, f \rangle =1.$  We
consider the following two cases.

{\bf Case 1:} $t=2$. Then by \cite{cvetkovic2010}, $G$ is bipartite
and $q_1(G)\le n$ with equality if and only if $G$ is complete
bipartite graph. Hence (\ref{thm1-1}) holds with equality if and
only if $G$ is complete bipartite graph.

{\bf Case 2:} $t\ge 3$. Then $G$ does not contain $K_{t+1}$ as a
subgraph. By Lemma~\ref{lemma2-1}, there exists a graph
$G_1=\overline{K_{n_1}}\bigtriangledown H_1$ such that $H_1$ does
not contain $K_{t}$ and $q_1(G)\le q_1(G_1)$. Moreover, equality
holds if and only if $G=G_1$.  Since $\overline{K_{n_1}}$ is a
simple graph and $H_1$ does not contain $K_{t}$,  by
Lemma~\ref{lemma21}, there exists a graph
$G_2=\overline{K_{n_1}}\bigtriangledown
\overline{K_{n_2}}\bigtriangledown H_2$ such that $H_2$ does not
contain $K_{t-1}$ and $q_1(G_1)\le q_2(G_2)$. Moreover, equality
holds if and only if $G_2=G_1$. If $H_2$ does not contain any edges,
then $H_2=\overline{K_{n_3}}$. Hence
$G_2=\overline{K_{n_1}}\bigtriangledown
\overline{K_{n_2}}\bigtriangledown \overline{K_{n_3}}$ is complete
3-partite graph and  $q_1(G)\le q_1(G_1)\le q_1(G_2)=
q_1(\overline{K_{n_1}}\bigtriangledown
\overline{K_{n_2}}\bigtriangledown \overline{K_{n_3}})$. If $H_2$
contains at least one edge, then by Lemma~\ref{lemma21}, there
exists a graph $G_3$ such that
$G_3=\overline{K_{n_1}}\bigtriangledown\overline{K_{n_2}}
\bigtriangledown
  \overline{K_{n_3}} \bigtriangledown H_3$ and
 $q_1(G_2)\le q_1(G_3)$ with $H_3$ not containing $K_{t-2}$, since
 $H_2$
does not contain $K_{t-1}$ and $\overline{K_{n_1}}\bigtriangledown
\overline{K_{n_2}}$ is a simple graph.
 By repeated use of Lemma~\ref{lemma21},
there exists a series of graphs $G_1,\cdots, G_s$ such that
$G_i=\overline{K_{n_1}}\bigtriangledown\overline{K_{n_2}}
\bigtriangledown
 \cdots\bigtriangledown \overline{K_{n_i}}\bigtriangledown H_i$ and
 $q_1(G)\le q_1(G_1)\le\cdots\le q_1(G_i)$ with equality if
and only if $G_i=G$, where $H_i$ does not contain $K_{t+1-i}$ and
$i=1,\cdots, s\le t$. Moreover, $H_s=\overline{K_{n_s}}$. Therefore,
$G_s$ is a complete s-partite graph. Further, by
Lemma~\ref{lemma2-2} and Corollary~\ref{cory4},
\begin{eqnarray*}
q_1(G)&\le& q_1(G_1)\le \cdots\le  q_1(G_s)\le q_1(T_{n, s})\le
q_1(T_{n,t})\\
&=& \frac{(3t-4)k+3r-2+\sqrt{t^2k^2+[(2r+4)t-8r]k+(r-2)^2}}{2}
\end{eqnarray*}
 with equality if and only if $G=G_1=\cdots =G_s=T_{n,s}=T_{n, t}$.
 This completes the proof.
\end{proof}

{\bf Remark:} From Theorem~\ref{thm1}, we are able to deduce
Tur\'{a}n theorem for $t\ge 3$.
\begin{corollary}\label{cor2-1}
Let $G$ be a connected graph of order $n$ not containing $K_{t+1}$.
If $t\ge 3$ and $n=kt+r, 0\le r<t$, then
$$|E(G)|\le |E(T_{n,t})|= \frac{t^2-t}{2}k^2+(t-1)rk+\frac{r(r-1)}{2}.$$
Moreover, equality holds if and only if $G=T_{n,t}$.
\end{corollary}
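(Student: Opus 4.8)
The plan is to turn the spectral inequality of Theorem~\ref{thm1} into an edge count by comparing $q_1(G)$ with $|E(G)|$ through the all-ones vector. First I would record the elementary lower bound $q_1(G)\ge 4|E(G)|/n$: taking $\mathbf{1}$ as a test vector in the Rayleigh quotient for $Q(G)=D(G)+A(G)$ gives $\langle \mathbf{1},\,Q(G)\mathbf{1}\rangle=\sum_i d_i+\sum_{i\ne j}a_{ij}=4|E(G)|$ while $\langle \mathbf{1},\,\mathbf{1}\rangle=n$, so that $q_1(G)\ge 4|E(G)|/n$, with equality exactly when $\mathbf{1}$ is a Perron vector, i.e. when $G$ is regular. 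Since $t\ge 3$ and $G$ contains no $K_{t+1}$, Theorem~\ref{thm1} gives $q_1(G)\le q_1(T_{n,t})$, and combining the two estimates yields
\[
|E(G)|\le \frac{n}{4}\,q_1(T_{n,t}).
\]

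The next step is to compare the right-hand side with $|E(T_{n,t})|$. Writing $|E(T_{n,t})|=\frac{n(n-1)}{2}-r\frac{(k+1)k}{2}-(t-r)\frac{k(k-1)}{2}$ and inserting the closed form of $q_1(T_{n,t})$ from Lemma~\ref{lemma2-2}, I would verify the key inequality
\[
0\le \frac{n}{4}\,q_1(T_{n,t})-|E(T_{n,t})|<1 .
\]
The left inequality is just the bound of the first paragraph applied to $T_{n,t}$ itself, and it is an equality precisely when $r=0$, since then all parts have size $k$ and $T_{n,t}$ is regular; the right inequality is a direct, if slightly tedious, computation with the radical in (\ref{lemma2-2-1}). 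Granting it, integrality of $|E(G)|$ upgrades $|E(G)|\le \frac{n}{4}q_1(T_{n,t})<|E(T_{n,t})|+1$ to $|E(G)|\le |E(T_{n,t})|$, which is the Tur\'an bound.

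For the equality statement I would split into two cases, and this is where I expect the real difficulty. When $r=0$ the middle inequality above is an equality, so the chain $|E(G)|\le \frac{n}{4}q_1(G)\le \frac{n}{4}q_1(T_{n,t})=|E(T_{n,t})|$ is tight throughout: $|E(G)|=|E(T_{n,t})|$ forces $q_1(G)=q_1(T_{n,t})$ with $G$ regular, whence $G=T_{n,t}$ by the equality clause of Theorem~\ref{thm1}. When $r\ne 0$ the rounding gap $\frac{n}{4}q_1(T_{n,t})-|E(T_{n,t})|\in(0,1)$ severs this link: knowing $|E(G)|=|E(T_{n,t})|$ no longer determines $q_1(G)$, so Theorem~\ref{thm1} supplies the extremal value but not, by itself, the extremal graph, and uniqueness must be recovered combinatorially. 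Here I would fall back on a standard Zykov switching: if a $K_{t+1}$-free graph attaining $|E(T_{n,t})|$ edges contained vertices $a,b,c$ with $ab,bc\notin E(G)$ but $ac\in E(G)$, then replacing one vertex by a duplicate of a suitably chosen other one keeps the graph $K_{t+1}$-free (as noted after the definition of duplication) while not decreasing, and in the right choice strictly increasing, the edge count, contradicting maximality; hence non-adjacency is an equivalence relation, $G$ is complete multipartite, and an elementary edge count over complete multipartite graphs on $n$ vertices (maximized by $t$ parts of nearly equal size) pins $G$ down as $T_{n,t}$. The main obstacle is thus twofold: verifying the clean bound $\frac{n}{4}q_1(T_{n,t})-|E(T_{n,t})|<1$ from the explicit radical, and restoring uniqueness for $r\ne 0$, where the spectral argument alone falls short precisely by the sub-unit rounding term.
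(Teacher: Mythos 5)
Your proposal is correct, and its first half is essentially the paper's own argument: both start from the Rayleigh bound $q_1(G)\ge 4|E(G)|/n$ via the all-ones vector, apply Theorem~\ref{thm1} to get $|E(G)|\le \frac{n}{4}q_1(T_{n,t})$, and then show that the gap $\varepsilon=\frac{n}{4}q_1(T_{n,t})-|E(T_{n,t})|$ lies in $[0,1)$ so that integrality of $|E(G)|$ finishes the bound. Two differences are worth recording. First, where you propose to verify $\varepsilon<1$ by direct manipulation of the radical in (\ref{lemma2-2-1}), the paper avoids the radical entirely: it observes that $\varepsilon$ is a root of an explicit quadratic $\varphi(x)$ with polynomial coefficients in $k,t,r$ satisfying $\varphi(0)\le 0<\varphi(1)$, which places the nonnegative root in $[0,1)$ at once; your route works but is more laborious. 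Second, and more substantively, the paper's proof stops after the inequality (``So the assertion holds'') and never actually establishes the equality characterization, whereas you correctly diagnose that for $r\neq 0$ the sub-unit gap $\varepsilon\in(0,1)$ breaks the spectral chain --- $|E(G)|=|E(T_{n,t})|$ then gives no lower control on $q_1(G)$, so Theorem~\ref{thm1} alone cannot identify the extremal graph --- and you repair this with Zykov symmetrization (non-adjacency is an equivalence relation in an edge-maximal $K_{t+1}$-free graph, hence $G$ is complete multipartite, hence $G=T_{n,t}$), reserving the purely spectral argument for the regular case $r=0$, where it does close. This is a genuine completion of what the paper leaves unproved. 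Two small points to tighten in your sketch: (i) before switching, note that the bound $|E|\le|E(T_{n,t})|$ extends from connected to all $K_{t+1}$-free graphs (adding edges between components creates no $K_{t+1}$ when $t\ge 2$), since a switch could disconnect the graph on which maximality is invoked; (ii) in the case $d(b)\ge\max\{d(a),d(c)\}$ of the symmetrization, a strict gain requires replacing \emph{both} $a$ and $c$ by duplicates of $b$, yielding $2d(b)-d(a)-d(c)+1\ge 1$ extra edges; replacing a single vertex need not increase the count.
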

\begin{proof}
Clearly $|E(T_{n,t})|= \frac{t^2-t}{2}k^2+(t-1)rk+\frac{r(r-1)}{2}.$
 By
Rayleigh's quotient, it is easy to see that $q_1(T_{n,t})\ge
\frac{4|E(T_{n,t})|}{n}$. Then
$\frac{nq_1(T_{n,t})}{4}-|E(T_{n,t})|\ge 0$ and
 \begin{eqnarray*}
 \varepsilon& \equiv &
\frac{nq_1(T_{n,t})}{4}-|E(T_{n,t})|\\
&=&-\frac{1}{8}\{ t^2k^2+[(2r+2)t-4r]k+r^2-2r-\\
&&(tk+r)\sqrt{t^2k^2+[(2r+4)t-8r]k+(r-2)^2}\}\\
& \ge & 0. \end{eqnarray*} On the other hand, let
$$\varphi(x)\equiv 4x^2+\{t^2k^2+[(2r+2)t-4r]k+r^2-2r\}x+(r-t)rk^2+(r-t)rk.$$
Then  $\varphi(x)=0$ has a root
$$x_1=-\frac{1}{8}\{ t^2k^2+[(2r+2)t-4r]k+r^2-2r-
(tk+r)\sqrt{t^2k^2+[(2r+4)t-8r]k+(r-2)^2}\}=\varepsilon,$$ i.e.,
$\varepsilon$ is a nonnegative root of $\varphi(x)=0$. Further,
 since $$\varphi(0)=(r-t)rk^2+(r-t)rk\le 0$$ and
 $$\varphi(1)=
(t^2-rt+r^2)k^2+[(r+2)t+r^2-4r]k+(r-1)^2>0,$$
 equation
$\varphi(x)=0$ has only two roots: one is  negative and the other is
 nonnegative  which lies in $[0,1)$. Then $0\le
\varepsilon<1$, i.e., $\lfloor \varepsilon\rfloor=0$, where $\lfloor
a\rfloor$ is the largest integer not greater than $a$. Hence
 $\lfloor
\frac{nq_1(T_{n,t})}{4}\rfloor=|E(T_{n,t})|$. Therefore, by
Theorem~\ref{thm1},
$$|E(G)|\le \lfloor\frac{nq_1(G)}{4}\rfloor\le
\lfloor\frac{nq_1(T_{n,t})}{4}\rfloor=|E(T_{n,t})|.$$ So the
assertion holds.
\end{proof}

The following result has been proved in \cite{cai2009} and
\cite{yu2012}.

\begin{corollary}(\cite{cai2009}, \cite{yu2012})\label{cor2}
Let $G$ be a connected simple graph of order $n$ with  chromatic
number $\chi\ge 3$. If $n=k\chi+r, 0\le r<\chi,$ then
$$q_1(G)\le
 \frac{(3\chi-4)k+3r-2+\sqrt{k^2\chi^2+[(2r+4)\chi-8r]k+(r-2)^2}}{2}$$
 with equality if and only if $G$ is $T_{n,\chi}$.
 \end{corollary}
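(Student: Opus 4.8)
The plan is to reduce this chromatic-number bound to the clique-number bound of Theorem~\ref{thm1}, using the elementary inequality $\omega(G)\le\chi(G)$ together with the strict monotonicity of $q_1(T_{n,t})$ in $t$ from Corollary~\ref{cory4}. First I would observe that the right-hand side is nothing but $q_1(T_{n,\chi})$: applying Lemma~\ref{lemma2-2} with $t=\chi$ and $n=k\chi+r$ identifies the displayed expression with the signless Laplacian spectral radius of the Tur\'{a}n graph $T_{n,\chi}$. Thus the entire statement reduces to proving $q_1(G)\le q_1(T_{n,\chi})$, with equality if and only if $G=T_{n,\chi}$.

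For the inequality, write $\omega=\omega(G)$ and recall $2\le\omega\le\chi$. When $\omega\ge3$, Theorem~\ref{thm1} gives directly $q_1(G)\le q_1(T_{n,\omega})$; in the degenerate case $\omega=2$, Case~1 of the proof of Theorem~\ref{thm1} gives $q_1(G)\le n=q_1(T_{n,2})$, so the bound $q_1(G)\le q_1(T_{n,\omega})$ holds uniformly for all $\omega\ge2$. Applying Corollary~\ref{cory4} repeatedly along the chain $\omega\le\omega+1\le\cdots\le\chi$ (legitimate since every intermediate index lies in $[2,n-1]$) then yields $q_1(T_{n,\omega})\le q_1(T_{n,\chi})$, and chaining the two inequalities proves the claimed upper bound.

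For the equality characterization I would track when both links can be simultaneously tight. If $\omega=2$, then $q_1(G)\le n=q_1(T_{n,2})<q_1(T_{n,3})\le q_1(T_{n,\chi})$, where the strict middle step uses $\chi\ge3$ and Corollary~\ref{cory4}; hence the overall inequality is strict and equality is impossible. Consequently, at equality we must have $\omega\ge3$, and both $q_1(G)=q_1(T_{n,\omega})$ and $q_1(T_{n,\omega})=q_1(T_{n,\chi})$ hold. The second identity forces $\omega=\chi$ by the \emph{strict} monotonicity in Corollary~\ref{cory4}, and the first then forces $G=T_{n,\omega}$ by the equality clause of Theorem~\ref{thm1} (applicable since $\omega\ge3$). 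Combining these gives $G=T_{n,\omega}=T_{n,\chi}$. The converse is immediate, since $T_{n,\chi}$ has chromatic number $\chi$ and attains the bound.

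The argument is essentially a monotonicity-and-sandwiching reduction, so I do not expect a substantial obstacle. The only point demanding care is the low-clique regime $\omega=2<\chi$: there Theorem~\ref{thm1} supplies only $q_1(G)\le n$ rather than a Tur\'{a}n-graph value, and one must invoke the \emph{strict} inequality $q_1(T_{n,2})<q_1(T_{n,3})$ both to confirm that the stated bound still holds and to rule out equality in this case.
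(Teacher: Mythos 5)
Your proposal is correct and follows essentially the same route as the paper, whose entire proof is the one-line reduction ``$\omega(G)\le\chi(G)$ and Theorem~\ref{thm1}.'' You have simply made explicit the details the paper leaves implicit: the identification of the bound with $q_1(T_{n,\chi})$, the monotonicity step via Corollary~\ref{cory4}, the $\omega=2<\chi$ corner case, and the equality analysis.
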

 \begin{proof} The assertion  follows from $\omega(G)\le \chi(G)$ and
Theorem~\ref{thm1}.
\end{proof}

The following corollary gives some conditions under which
Conjectures~\ref{con1} and \ref{con2} hold, or need not hold.
\begin{corollary}\label{cor3}
Let $G$ be a  simple connected graph of order $n\ge 10$ with clique
number $\omega$.

(i). If $\omega\le 4$ or $\omega\ge \lceil\frac{n}{2}\rceil$, then
$q_1(G)\le \frac{3n}{2}+\omega-4$. Moreover, equality holds if and
only if $G$ is Tur\'{a}n graph $T_{n,4}$ and $n=4k$ or Tur\'{a}n
graph $T_{n,k}$ and $n=2k$. In other words, Conjectures~\ref{con1}
and \ref{con2} hold.

(ii). If $5\le \omega<\lceil\frac{n}{2}\rceil$, then
$q_1(T_{n,\omega})>\frac{3n}{2}+\omega-4$. In other words,
Conjectures~\ref{con1} and \ref{con2} generally do not hold.

(iii). $\frac{q_1(G)}{\omega}\le\frac{n}{2}.$
\end{corollary}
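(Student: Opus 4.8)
The plan is to read all three parts directly off Theorem~\ref{thm1} together with the formula for $q_1(T_{n,t})$ from Lemma~\ref{lemma2-2} and the monotonicity supplied by Corollary~\ref{cory4}. The unifying observation is that Theorem~\ref{thm1} reduces every question about a general graph $G$ with clique number $\omega$ to a question about the single number $q_1(T_{n,\omega})$, so each part becomes a concrete inequality between the closed-form expression in (\ref{thm1-1}) and the target quantity $\frac{3n}{2}+\omega-4$ (for parts (i),(ii)) or $\frac{n\omega}{2}$ (for part (iii)).

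For part (i) I would split along the two stated regimes. When $\omega\ge\lceil n/2\rceil$ the Tur\'an graph $T_{n,\omega}$ has all parts of size $1$ or $2$; in the extreme case $\omega$ large enough, $T_{n,\omega}$ is the complement of a matching, whose signless Laplacian spectral radius can be computed explicitly and compared to $\frac{3n}{2}+\omega-4$. When $\omega\le 4$ I would substitute $t=\omega\in\{2,3,4\}$ into (\ref{lemma2-2-1}) and verify the bound $q_1(T_{n,\omega})\le\frac{3n}{2}+\omega-4$ by a direct estimate of the surd, using $n=k\omega+r$; since by Theorem~\ref{thm1} we have $q_1(G)\le q_1(T_{n,\omega})$, the desired inequality for $G$ follows, and the equality analysis is inherited from Theorem~\ref{thm1} (equality forces $G=T_{n,\omega}$) refined by checking which of $\omega=4,\ n=4k$ or the $n=2k$ complement-of-matching case actually meets the bound. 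This then certifies Conjectures~\ref{con1} and \ref{con2} in these ranges because $\omega\le\chi$ and the $\frac{3}{2}n-4$ bound in those conjectures is weaker.

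For part (ii) the task is the reverse inequality: for $5\le\omega<\lceil n/2\rceil$ I would show $q_1(T_{n,\omega})>\frac{3n}{2}+\omega-4$ by bounding the square-root term in (\ref{lemma2-2-1}) from below, exhibiting a single explicit witness (the Tur\'an graph itself) that violates the conjectured bound; this is what ``disproves'' the conjectures. Part (iii) follows most cleanly: by Theorem~\ref{thm1} it suffices to prove $q_1(T_{n,\omega})\le\frac{n\omega}{2}$, and here I expect to compare (\ref{lemma2-2-1}) against $\frac{n\omega}{2}=\frac{(k\omega+r)\omega}{2}$, reducing after clearing the root to a polynomial inequality in $k,r,\omega$ that holds termwise for $\omega\ge 2$; the $\omega=2$ boundary uses $q_1=n$ from Lemma~\ref{lemma2-2} directly.

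The main obstacle in all three parts is the same: the surd $\sqrt{t^2k^2+[(2r+4)t-8r]k+(r-2)^2}$ makes every comparison a one-variable irrationality rather than a linear inequality, so the real work is to square correctly while tracking the sign of both sides and to handle the residue $r$ (equivalently the floor in $k=\lfloor n/\omega\rfloor$) uniformly. I would manage this by isolating the root, squaring, and reducing to a quadratic-in-$k$ (or in $\omega$) inequality whose discriminant or leading coefficient can be signed explicitly; the threshold $\omega=4$ versus $\omega\ge 5$ in parts (i)–(ii) is exactly where that sign flips, which is why the conjectures hold below it and fail above it.
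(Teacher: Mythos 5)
Your plan for parts (i) and (ii) is essentially the paper's own proof: the paper also splits (i) into $\omega=2,3,4$ and $\omega\ge\lceil n/2\rceil$ (the latter further into $n=2\omega$, giving equality, and $n=\omega+r$, $r<\omega$, giving strict inequality), bounds the surd $\sqrt{t^2k^2+[(2r+4)t-8r]k+(r-2)^2}$ above by a linear expression in $k$ in each case, and for (ii) bounds the same surd below by $k\omega+2$ to get $q_1(T_{n,\omega})\ge\frac{3n}{2}+\frac{(\omega-4)k}{2}\ge\frac{3n}{2}+\omega-4$, then rules out equality because it would force $k=2$, $r=0$, contradicting $\omega<\lceil n/2\rceil$ (your witness argument should likewise record that this range forces $r\neq 0$ or $k>2$, which is what makes the inequality strict). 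Where you genuinely diverge is part (iii): you propose comparing the closed-form bound (\ref{thm1-1}) against $\frac{n\omega}{2}$ by isolating and squaring the root, whereas the paper avoids the surd entirely, using $q_1(G)\le n=\frac{n\omega}{2}$ for $\omega=2$ (triangle-free case, via \cite{cvetkovic2010}), part (i) for $\omega=3$, and the coarse general bound $q_1(G)\le 2n-2<\frac{n\omega}{2}$ from \cite{cvetkovic2010} for $\omega\ge 4$. Your route does close: the cleanest way to finish it is to pass through the estimate $q_1(G)\le 2n\left(1-\frac{1}{\omega}\right)$ (which the paper derives from (\ref{thm1-1}) in Corollary~\ref{conj-3} by exactly the kind of surd manipulation you describe), since $2n\left(1-\frac{1}{\omega}\right)\le\frac{n\omega}{2}$ reduces to $(\omega-2)^2\ge 0$. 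So your (iii) is self-contained in Theorem~\ref{thm1} at the cost of more algebra, while the paper's is shorter by borrowing known spectral bounds; both are valid.
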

\begin{proof}
If $\omega=2$, then $q_1(G)\le n$ and $q_1(G)\le n\le
\frac{3n}{2}+2-4$. So (i) holds.

  If $\omega=3$ and $n=3k+r$ with $0\le r<3$, then by
  (\ref{thm1-1}),
\begin{eqnarray*}
q_1(G)&\le & \frac{5k+3r-2+\sqrt{9k^2+(12-2r)k+(r-2)^2}}{2}\\
&\le &\frac{5k+3r-2+3k+2}{2}\\
&\le & \frac{9k+3r-2}{2}= \frac{3n}{2}+\omega-4.
\end{eqnarray*}
 So (i) holds.

 If
$\omega= 4$,  by (\ref{thm1-1}), we have
$$q_1(G)\le \frac{8k+3r-2+\sqrt{16k^2+16k+(r-2)^2}}{2}\le \frac{8k+3r-2+(4k+2)}{2}= \frac{3n}{2}+\omega-4$$
with equality if and only if  $n=4k$. Hence (i) holds.

Now assume that  $\omega\ge\lceil \frac{n}{2}\rceil$. If
$n=2\omega$, then by (\ref{thm1-1}),
$$q_1(G)\le
\frac{2(3\omega-4)-2+\sqrt{4\omega^2+8\omega+4}}{2}=4\omega-4=\frac{3n}{2}+\omega-4.$$
Moreover, equality if and only if $n=2\omega$. If $n\neq 2\omega,$
 then $n=\omega + r$ with $r<\omega$ and
\begin{eqnarray*}
q_1(G)&\le &
\frac{(3\omega-4)+(3r-2)+\sqrt{\omega^2+(2r+4)\omega-8r+(r-2)^2}}{2}\\
&< & \frac{(3\omega-4)+(3r-2)+2\omega-2}{2}\\
&=&\frac{3n}{2}+\omega-4.
\end{eqnarray*}
Hence (i) holds.

(ii). Suppose that $5\le \omega< \lceil \frac{n}{2}\rceil$.
 Let $n=k\omega+r$, $0\le r<\omega$. Then $r\neq 0$ or  $k> 2$. Hence
  by Theorem~\ref{thm1} and some calculations, it is easy to verify
  that
  \begin{eqnarray*}q_1(T_{n, \omega})
  &=  &\frac{(3\omega-4)k+3r-2+\sqrt{\omega^2k^2+[(2r+4)\omega-8r]k+(r-2)^2}}{2}\\
  &\ge &\frac{(3\omega-4)k+3r-2+(k\omega+2)}{2}\\
  &=& \frac{3n}{2}+\frac{(\omega-4)k}{2}\\
  &\ge &\frac{3n}{2}+\omega-4.
  \end{eqnarray*}
  Moreover, if  $q_1(T_{n, \omega})=\frac{3n}{2}+\omega-4$, then $k=2$ and $r=0$ which
  is impossible.  So (ii) holds.

   (iii). If $\omega=1$, then $q_1(G)=0<\frac{n\omega}{2}$. If $\omega\ge
   2$, then by \cite{cvetkovic2010}, $q_1(G)\le
   n=\frac{n\omega}{2}$. If $\omega=3$, the assertion follows from
   (1). If $\omega\ge 4$, then by \cite{cvetkovic2010}, $q_1(G)\le
   2n-2<\frac{n\omega}{2}$. Hence (iii) holds.
\end{proof}

{\bf Remark: } In fact, Theorem~\ref{thm1} also confirms the
following conjecture of Hansen and Lucas \cite{hansen2009}.
\begin{corollary}\label{conj-3}(\cite{hansen2009})
Let $G$ be a graph of order $n$ with clique number $\omega$. Then
$q_1(G)\le 2n(1-\frac{1}{\omega})$. Moreover, if $\omega\neq 2$, the
upper bound is sharp if and only if $G$ is a complete regular
$\omega$-partite graph.
\end{corollary}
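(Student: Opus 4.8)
The plan is to read off Corollary~\ref{conj-3} from Theorem~\ref{thm1}, reducing the whole statement to one algebraic inequality together with an equality analysis. The guiding idea is that $2n(1-\frac{1}{\omega})$ is exactly twice the common degree of the \emph{balanced} Tur\'an graph $T_{n,\omega}$, so that a regular complete $\omega$-partite graph has $q_1=2n(1-\frac{1}{\omega})$; the content is therefore the comparison $q_1(T_{n,\omega})\le 2n(1-\frac{1}{\omega})$. First I would dispose of the two small cases. If $\omega=1$ then $G$ has no edges and $q_1(G)=0=2n(1-\frac{1}{\omega})$. If $\omega=2$ then $G$ is triangle-free with an edge, so as in Case~1 of the proof of Theorem~\ref{thm1} one has $q_1(G)\le n=2n(1-\frac{1}{2})$, with equality for every complete bipartite graph; since an unbalanced $K_{p,q}$ already attains $q_1=p+q=n$ without being regular, the sharpness clause genuinely fails at $\omega=2$, which is precisely why that value is excluded.

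For $\omega\ge 3$ I would first treat connected $G$ and write $n=k\omega+r$ with $0\le r<\omega$ (note $n\ge\omega$ forces $k\ge 1$). By Theorem~\ref{thm1}, $q_1(G)\le q_1(T_{n,\omega})$, the latter being the right-hand side of (\ref{thm1-1}); so it suffices to bound that expression by $2n(1-\frac{1}{\omega})=(2\omega-2)k+2r-\frac{2r}{\omega}$. Cancelling the linear terms reduces the inequality to $S\le T$, where
\[
 S=\sqrt{k^2\omega^2+[(2r+4)\omega-8r]k+(r-2)^2},\qquad T=\omega k+r+2-\frac{4r}{\omega}.
\]
The key observation is that squaring $T$ reproduces the leading term $\omega^2k^2$ and, upon expanding $2(\omega k)(r+2-\frac{4r}{\omega})$, exactly the same cross term $[(2r+4)\omega-8r]k$ occurring under the radical; hence the two squares differ only in their constants, and
\[
 T^2-S^2=\left(r+2-\frac{4r}{\omega}\right)^2-(r-2)^2=\frac{4(\omega-r)}{\omega}\cdot\frac{2r(\omega-2)}{\omega}.
\]

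Both factors on the right are nonnegative for $\omega\ge 3$ and $0\le r<\omega$, so $T^2\ge S^2$; as $S,T\ge 0$ this gives $S\le T$, hence the bound. The product vanishes if and only if $r=0$, which is exactly the equality case: when $r=0$ the graph $T_{n,\omega}$ has all $\omega$ parts of size $n/\omega$, is $(n-\frac{n}{\omega})$-regular, and therefore satisfies $q_1=2n(1-\frac{1}{\omega})$; this is the complete regular $\omega$-partite graph. Combining this with the equality clause of Theorem~\ref{thm1} ($q_1(G)=q_1(T_{n,\omega})$ forces $G=T_{n,\omega}$) yields the characterization for connected $G$. To remove connectedness I would pass to the component $G^{*}$ realizing $q_1(G)$; since $|V(G^{*})|\le n$, $\omega(G^{*})\le\omega$, and $2m(1-\frac{1}{s})$ is monotone in both $m$ and $s$, the bound persists, and a disconnected $G$ satisfies it strictly, so it cannot be extremal.

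I expect the main obstacle to lie not in the squaring step itself but in setting up the reduction so that the cross terms cancel: the whole argument hinges on the precise form $T=\omega k+r+2-\frac{4r}{\omega}$, and in particular on tracking the $-\frac{4r}{\omega}$ contributed by the $\frac{2r}{\omega}$ in $2n(1-\frac{1}{\omega})$; a cruder estimate of the surd would lose the exact equality condition $r=0$. The remaining care is organizational: handling the passage from the connected setting of Theorem~\ref{thm1} to an arbitrary graph, and keeping the $\omega=2$ anomaly separate so that the sharpness statement is asserted only where it actually holds.
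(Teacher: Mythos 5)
Your proof is correct and follows essentially the same route as the paper: both deduce the bound from Theorem~\ref{thm1} by matching the cross term under the radical and comparing constants, i.e.\ the inequality $(r-2)^2\le\left(r+2-\frac{4r}{\omega}\right)^2$ (your $T^2-S^2$ factorization), with equality exactly at $r=0$, giving the balanced Tur\'an graph. Your write-up is in fact a bit more careful than the paper's, which silently assumes connectivity and lumps $\omega=2$ into the same computation, but these are refinements of the identical argument rather than a different approach.
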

\begin{proof} If $\omega=1$, then $q_1(G)=0$ and  the assertion
holds. Now assume that $\omega\ge 2$ and  $n=k\omega+r$ with $0\le
r<\omega$. Then  $(r-2)^2\le (r+2-\frac{4r}{\omega})^2$ with
equality if and only if $r=0$. Hence by (\ref{thm1-1}),
\begin{eqnarray*} q_1(G) &\le &
\frac{(3\omega-4)k+3r-2+\sqrt{k^2\omega^2+[(2r+4)\omega-8r]k+(r-2)^2}}{2}\\
&\le
&\frac{(3\omega-4)k+3r-2+\sqrt{k^2\omega^2+2k\omega(r+2-\frac{4r}{\omega})+
(r+2-\frac{4r}{\omega})^2}}{2}\\
&= & \frac{(3\omega-4)k+3r-2+k\omega+r+2-\frac{4r}{\omega}}{2}\\
&=& \frac{4k\omega+4r-4k-\frac{4r}{\omega}}{2}\\
&=& 2n(1-\frac{1}{\omega}).
\end{eqnarray*}
So the assertion holds.
\end{proof}

\section{Further Results}

In this section, we begin with  a lower bound for the signless
Laplacian spectral radius of a graph in terms of clique number.
\begin{theorem}\label{them2}
 Let $G$ be a connected graph of order $n$ with clique number
 $\omega$.

 (i). If $\omega=2$, then $q_1(G)\ge 2+2\cos\frac{\pi}{n}$ with
 equality if and only  if $G$ is a path of order $n$.

 (ii). If $\omega\ge 3,$ then
 \begin{equation}\label{cor3-1-1}
 q_1(G)\ge q_1(Ki_{n,\omega}),
 \end{equation}
  where $Ki_{n,\omega}$ is the {\it kite graph} of order $n$ which is  obtained
  by joining one vertex of a complete graph
  $K_{\omega}$  to one end vertex of a path $P_{n-\omega}$  with a bridge.
  Moreover, equality holds if and only if $G$ is the kite
  $Ki_{n,\omega}$.
\end{theorem}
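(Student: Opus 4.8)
The plan is to treat both parts as minimization problems and solve them by reducing an arbitrary $G$ to the claimed extremal graph through a sequence of $q_1$-decreasing operations, using two facts: that $q_1$ strictly increases under edge addition (already exploited in the proof of Lemma~\ref{lemma2-1}), and a relocation inequality for the signless Laplacian. For part (i), $\omega=2$ means exactly that $G$ is triangle-free (and connected, hence has an edge). First I would delete edges lying on cycles until a spanning tree $T$ remains; this preserves connectivity and triangle-freeness and yields $q_1(G)\ge q_1(T)$ with equality iff $G=T$. It then suffices to prove that among all trees of order $n$ the path $P_n$ uniquely minimizes $q_1$, together with $q_1(P_n)=2+2\cos\frac{\pi}{n}$. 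The value is immediate since $P_n$ is bipartite, so $q_1(P_n)$ equals the Laplacian spectral radius $2-2\cos\frac{(n-1)\pi}{n}=2+2\cos\frac{\pi}{n}$.

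The engine for the tree minimality and for all of part (ii) is the following relocation lemma. Suppose $G$ has the edge $uz$ but not $wz$, and let $G'$ be obtained by deleting $uz$ and adding $wz$; if $g$ is the (normalized) Perron eigenvector of $Q(G')$ and $g(u)\ge g(w)$, then evaluating the Rayleigh quotient of $Q(G)$ at $g$ gives
\begin{equation}
q_1(G)\ \ge\ \langle g,\,Q(G)g\rangle\ =\ q_1(G')+(g(u)+g(z))^2-(g(w)+g(z))^2\ \ge\ q_1(G'),
\end{equation}
with strict inequality when $g(u)>g(w)$. In words, relocating an edge onto an endpoint of smaller Perron weight strictly lowers $q_1$. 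I would pair this with the elementary fact that along any pendant path the Perron vector is strictly decreasing (from $q_1>2$ and the degree-$2$ recursion on internal path vertices), so a clique vertex always dominates the far end of any attached path.

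For part (ii), with $\omega=t\ge 3$, I would first delete edges down to an edge-minimal connected graph containing $K_\omega$: such a graph is precisely $K_\omega$ together with a forest rooted at clique vertices (contracting $K_\omega$ gives a tree), and $q_1$ has only decreased. Then I would apply the relocation lemma in an ``attach-to-the-tail'' form: whenever an attached tree branches, or whenever pendant paths hang from two different clique vertices, I detach the offending branch and reattach its root edge to the end of the longest pendant path. In the resulting straighter graph the reattachment end has strictly smaller Perron value than the clique or branch vertex it left, so each step strictly lowers $q_1$. Iterating drives every competitor to a single pendant path attached at one clique vertex, that is, to $Ki_{n,\omega}$, which is the unique graph admitting no further decreasing relocation; tracking equality through the chain yields $q_1(G)\ge q_1(Ki_{n,\omega})$ with equality iff $G=Ki_{n,\omega}$.

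The main obstacle is exactly this consolidation onto a single clique vertex. The naive move---detaching a pendant path at a clique vertex $c_2$ and hanging it directly on another clique vertex $c_1$---is treacherous, because when $c_1,c_2$ are symmetric their Perron weights coincide and $q_1$ need not drop; a small $K_3$ example in fact shows that the intermediate ``two pendants at one vertex'' configuration has \emph{larger} $q_1$ than the spread-out one. The fix, and the step demanding care, is to reattach not to the clique vertex but to the tail of the existing pendant path, and to measure the decrease with the Perron vector of the \emph{merged} graph rather than the original. Establishing the strict Perron-dominance of clique vertices over pendant ends is precisely what makes this relocation provably $q_1$-decreasing, and it is the crux on which the whole argument rests.
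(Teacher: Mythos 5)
Your overall route is the one the paper takes: delete edges to reach an edge-minimal connected graph with clique number $\omega$ (which is $K_\omega$ with trees attached by bridges), then straighten the attached forest into a single pendant path by relocations that can only decrease $q_1$. The paper, however, outsources both hard steps to the literature (part (i) is quoted from Lemma~1 of \cite{hansen2010}, and the straightening is ``repeated use of Theorem~2.2'' of \cite{cvetkovic2010}, the signless-Laplacian analogue of the Li--Feng path-relocation lemma), whereas you try to build the machinery from scratch. Your relocation inequality itself is correct, and in fact stronger than you state: if $g>0$ is the Perron vector of the connected graph $G'$ and $g(u)\ge g(w)$, then $q_1(G)>q_1(G')$ always, since equality would force $g$ to be a $q_1$-eigenvector of both $Q(G)$ and $Q(G')$, and then the $u$-coordinate of $(Q(G)-Q(G'))g$ equals $g(u)+g(z)>0$, a contradiction. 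Your tail-reattachment trick (and the warning that moving a path directly onto another clique vertex goes the wrong way) is also exactly right.

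The genuine gap is the hypothesis you need at every single step, namely $g(u)\ge g(w)$ in the straightened graph, which is asserted but never proved, and whose offered justification does not work. First, the claim that the Perron vector of $Q$ decreases strictly along any pendant path ``from $q_1>2$'' is false: the recursion $g(v_{i-1})=(q_1-2)g(v_i)-g(v_{i+1})$ propagates the decrease inward from the pendant end only when $q_1\ge 4$ (one needs $(q_1-3)g(v_i)>g(v_{i+1})$); for $2<q_1<4$ the Perron vector of a path is unimodal, not monotone. This is harmless in part (ii), where every intermediate graph properly contains $K_\omega$ with $\omega\ge 3$ and hence has $q_1>4$, but it breaks your proof of part (i), where the straightened trees --- including the final target $P_n$ --- have $q_1<4$. (Part (i) is better done directly: a tree $T\neq P_n$ is bipartite with maximum degree at least $3$, so $q_1(T)$ equals its Laplacian spectral radius, which is at least $\Delta+1\ge 4>2+2\cos\frac{\pi}{n}=q_1(P_n)$.) Second, and more seriously, monotonicity only compares vertices lying on one common pendant path of $G'$, while your key moves require cross-branch comparisons: e.g.\ relocating the path at a clique vertex $c_2$ onto the tail $t_k$ of the path at $c_1$ needs $g(c_2)\ge g(t_k)$, with $c_2$ and $t_k$ on different branches. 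Such comparisons are true here but need separate arguments --- for instance, from the eigen-equations $(q_1-2\omega+3)g(c_2)=g(c_1)$ and $(q_1-2)g(p_1)=g(c_1)+g(p_2)$ one gets $g(c_2)>g(p_1)\ge g(t_k)$ using $\omega\ge3$ and $q_1>4$ --- and they are genuinely delicate: solving the path recursion explicitly ($g(x_j)\propto \lambda^{2m+1-j}-\lambda^j$ with $\lambda+\lambda^{-1}=q_1-2$) shows that analogous cross-branch dominance statements can actually fail as $q_1\to 4^{+}$, so a quantitative lower bound such as $q_1\ge q_1(Ki_{\omega+1,\omega})>4.5$ (valid here because every intermediate graph contains $K_\omega$ plus a pendant edge) must enter the argument somewhere. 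Until these dominance claims are established, the engine driving the whole reduction --- which you yourself identify as the crux --- is missing; supplying it amounts to re-proving the cited Theorem~2.2 of \cite{cvetkovic2010}, which is precisely the step the paper declines to reprove.
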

\begin{proof}  If $\omega=2$, (i) follows from Lemma~1 in \cite{hansen2010}.
 Now assume that $\omega\ge 3$. Since $q_1(G)$ is an strictly
 increasing function with respect to adding edges for a connected
 graph, there exists a connected graph $G_1$ of order $n$ obtained from $G$
 by deleting some edges such that $\omega(G_1)=\omega$ and any
 proper subgraph of $G_1$ is disconnected  or the clique number
 fewer than $\omega$. Hence $q_1(G)\ge q_1(G_1)$ with equality if
 and only if $G_1=G$. By repeated use of Theorem~2.2 in
 \cite{cvetkovic2010}, $q_1(G_1)\ge q_1(Ki_{n,\omega})$ with
 equality if and only if $G_1=Ki_{n,\omega}$. Hence the assertion
 holds.
\end{proof}

\begin{corollary}\label{cor4}
Let $G$ be a connected graph of order $n$ with clique number
$\omega\ge 3$. Then \begin{equation} q_1(G)\ge
\frac{2\omega-1+\sqrt{4\omega^2-12\omega+17}}{2} \end{equation}
 with
equality if and only if $G=Ki_{n,n-1}$ and $\omega=n-1$.
\end{corollary}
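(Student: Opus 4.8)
The plan is to combine Theorem~\ref{them2}(ii) with a monotonicity reduction to the smallest kite, followed by an explicit spectral computation. I would work under the assumption $n\ge\omega+1$, i.e. that $G$ is not complete, so that a genuine kite exists; note that the asserted equality case forces $\omega=n-1<n$, so this is exactly the relevant range. By Theorem~\ref{them2}(ii), since $\omega\ge 3$, we already have $q_1(G)\ge q_1(Ki_{n,\omega})$ with equality if and only if $G=Ki_{n,\omega}$. Hence it suffices to bound $q_1(Ki_{n,\omega})$ below by the stated quantity and to track when this second inequality is tight.

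The first step is to reduce to the minimal kite $Ki_{\omega+1,\omega}$, which is $K_\omega$ with a single pendant vertex. This graph is precisely the subgraph of $Ki_{n,\omega}$ induced by the clique together with the first vertex of the path. Padding the Perron vector of $Q(Ki_{\omega+1,\omega})$ with zeros on the remaining path vertices and applying the Rayleigh quotient---using that every edge of $Ki_{\omega+1,\omega}$ is also an edge of $Ki_{n,\omega}$ and that no degree decreases---yields $q_1(Ki_{n,\omega})\ge q_1(Ki_{\omega+1,\omega})$; the inequality is strict once $n>\omega+1$, because then $Ki_{n,\omega}$ carries an extra edge incident to the first path vertex, which lies in the support of the (everywhere positive) Perron vector. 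Thus equality here requires $n=\omega+1$.

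It then remains to compute $q_1(Ki_{\omega+1,\omega})$ explicitly. I would use the equitable partition of $Ki_{\omega+1,\omega}$ into the three cells consisting of the pendant vertex, the attachment vertex, and the remaining $\omega-1$ clique vertices. The Perron vector is constant on each cell, so $q_1$ equals the largest eigenvalue of the $3\times 3$ quotient matrix
\begin{equation}
B=\left(\begin{array}{ccc} 1 & 1 & 0 \\ 1 & \omega & \omega-1 \\ 0 & 1 & 2\omega-3 \end{array}\right).
\end{equation}
A direct expansion gives $\det(qI-B)=(q-1)(q-\omega)(q-2\omega+3)-(\omega q-3\omega+4)$, which factors as $(q-\omega+1)\bigl(q^2-(2\omega-1)q+2\omega-4\bigr)$. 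For $\omega\ge 3$ the larger root of the quadratic factor exceeds $2\omega-2>\omega-1$, so it is the Perron root; hence $q_1(Ki_{\omega+1,\omega})=\frac{1}{2}\bigl(2\omega-1+\sqrt{4\omega^2-12\omega+17}\bigr)$, which is exactly the claimed bound.

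Assembling the chain $q_1(G)\ge q_1(Ki_{n,\omega})\ge q_1(Ki_{\omega+1,\omega})$ gives the inequality, and equality holds throughout if and only if both reductions are tight, i.e. $G=Ki_{n,\omega}$ and $n=\omega+1$, which is to say $G=Ki_{n,n-1}$ with $\omega=n-1$. The only genuinely computational step is the factorization of the cubic $\det(qI-B)$ and the verification that the relevant root is the Perron root. I expect the part needing the most care to be the equality analysis, since one must ensure that the two reductions are simultaneously tight, and that the degenerate range $n=\omega$ (where $G=K_n$, $q_1=2\omega-2$, and the stated bound is in fact not valid) is understood to be excluded.
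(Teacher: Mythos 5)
Your proof is correct, and while it follows the same overall skeleton as the paper's --- the chain $q_1(G)\ge q_1(Ki_{n,\omega})\ge q_1(Ki_{\omega+1,\omega})$ with the last quantity evaluated explicitly --- both technical steps are carried out by genuinely different means. For the middle inequality the paper passes to line graphs: since $L(Ki_{\omega+1,\omega})$ is an induced subgraph of $L(Ki_{n,\omega})$ and $q_1(H)=2+\lambda_1(L(H))$ for graphs with at least one edge, monotonicity of the adjacency spectral radius under induced subgraphs gives the claim; you instead pad the Perron vector of $Q(Ki_{\omega+1,\omega})$ with zeros and use the quadratic form $\langle f, Qf\rangle=\sum_{uv\in E}(f(u)+f(v))^2$, which is valid and moreover yields strictness for $n>\omega+1$ transparently, from positivity of the Perron entry at the first path vertex. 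For the value of $q_1(Ki_{\omega+1,\omega})$ the paper simply cites Hansen--Lucas, whereas you compute it from the equitable partition into the pendant vertex, the attachment vertex, and the remaining $\omega-1$ clique vertices; I verified your quotient matrix, the factorization $\det(qI-B)=(q-\omega+1)\bigl(q^2-(2\omega-1)q+2\omega-4\bigr)$, and the identification of the larger quadratic root (which exceeds $2\omega-2\ge\omega-1$) as the Perron root, and all are correct. So your argument is self-contained exactly where the paper's leans on external results. One further point in your favor: your closing caveat about $n=\omega$ is a genuine catch, not pedantry. As stated, the corollary admits $G=K_n$ (clique number $\omega=n$), for which $q_1(K_n)=2n-2$ is strictly \emph{below} the claimed bound, so the statement implicitly requires $\omega\le n-1$; the paper's own proof glosses over this (its middle inequality would then assert $q_1(K_\omega)\ge q_1(Ki_{\omega+1,\omega})$, which is false), while you make the needed restriction explicit.
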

\begin{proof} By Theorem~\ref{them2}, $q_1(G)\ge q_1(Ki_{n,\omega}).$
Since the line graph of $Ki_{\omega+1,\omega}$ is the induced
subgraph of the line graph of $Ki_{n,\omega}$, it is easy to see
that $q_1(Ki_{n,\omega})\ge q_1(Ki_{\omega+1,\omega})$ with equality
if and only if $\omega=n-1$.  Hence by~\cite{hansen2010}, we have
$$q_1(Ki_{n,\omega})\ge
q_1(Ki_{\omega+1,\omega})=\frac{2\omega-1+\sqrt{4\omega^2-12\omega+17}}{2}.$$
Hence the assertion holds.
\end{proof}

{\bf Remark:} Oliveira et.al. \cite{Oliveira2010} presented several
sharp upper bounds for the signless Laplacian spectral radius of a
graph  in terms of vertex degrees and 2-average degree. Let $G$ be a
graph with degree sequence $d_1\ge d_2\ge \cdots\ge d_n$ and
$m_i=\frac{\sum_{v_jv_i\in E(G)}d_j}{d_i}$ for $i=1, \cdots, n$.
Part results can be stated as follows:

\begin{equation}\label{Oli158}
q_1(G)\le \max_{1\le i\le n}\{\ \ d_i+\sqrt{d_im_i} \  \ \}
\end{equation}
and
\begin{equation}\label{Oli158-2}
q_1(G)\le \max_{1\le i\le n} \left\{\
 \ \frac{d_i+\sqrt{d_i^2+8d_im_i}}{2}\ \ \right\}.
\end{equation}
Liu and Liu \cite{liu2008} gave an upper bound in terms of the
largest degree and clique number, i.e.,
\begin{equation}\label{Liucmj}
q_1(G)\le n+d_1-\frac{n}{\omega(G)}.
\end{equation}
Yu et.~al. \cite{yu2011} obtained the following upper bound in terms
of degree sequence, i.e.,
\begin{equation}\label{shu434}
q_1(G)\le \min_{1\le i\le n }\left\{\
\frac{d_1+2d_i-1+\sqrt{(2d_i-d_1+1)^2+8(i-1)(d_1-d_i)}}{2}\
\right\}.
\end{equation}

In general, these bounds (\ref{thm1-1}), (\ref{Oli158}),
(\ref{Oli158-2}) (\ref{Liucmj}), (\ref{shu434}) are not comparable.
We present two examples to illustrate that our bounds are best in
some cases.

\begin{example} Let $G_1$ be Tur\'{a}n graph $T_{10,3}$ of order $10$
 and $G_2$ be a graph of order $7$ as follows:

\begin{figure}[ht]
\setlength{\unitlength}{1mm}
\begin{picture}(120, 60)
\centering \put(60,50){\circle*{2}}
   \put(40,50){\circle*{2}}
\put(80,50){\circle*{2}}

\put(20,30){\circle*{2}}\put(20,10){\circle*{2}}

\put(100,30){\circle*{2}}\put(100,10){\circle*{2}}

\put(40,50){\line(-1,-1){20}} \put(40,50){\line(-1,-2){20}}
\put(40,50){\line(3,-1){60}} \put(40,50){\line(3,-2){60}}

 \put(60,50){\line(-2,-1){40}}
\put(60,50){\line(-1,-1){40}}

 \put(60,50){\line(2,-1){40}}
\put(60,50){\line(1,-1){40}}

\put(80,50){\line(1,-1){20}} \put(80,50){\line(1,-2){20}}
\put(80,50){\line(-3,-1){60}} \put(80,50){\line(-3,-2){60}}

\put(20,30){\line(1,0){80}} \put(20,30){\line(4,-1){80}}
 \put(20,10){\line(1,0){80}}

\put(40,52){$v_{1}$} \put(60,52){$v_{2}$} \put(80,52){$v_{3}$}

\put(100,32){$v_{4}$} \put(100,12){$v_{5}$}
 \put(17,12){$v_{6}$}
\put(17,32){$v_{7}$}  \put(60,3){$G_2$}

\end{picture}
\end{figure}
Then we have following results \vskip 0.5cm

\begin{tabular}{|c|c|c|c|c|c|c|}
\hline  & $q_1(G)$ &  (\ref{thm1-1}) & (\ref{Oli158}) &
(\ref{Oli158-2}) &(\ref{Liucmj})& (\ref{shu434})\\
 \hline
   $T_{10,3}$    & 13.2915 &13.2915 & 13.7082 & 13.6119 &13.6667  & 13.5826\\
\hline $G_2$ &8.7417 & 9.2749 &9.5826 &9.4462 &9.6667  &8.8284\\
\hline
\end{tabular}
\end{example}

\vskip2cm
\begin{center} \vskip 0.3cm
 {\bf Acknowledgements}
\end{center}

The authors wish to thank the referees for their valuable comments
and suggestions.

 The  third author  would like to express his deepest
thanks to his advisor Professor Abraham Berman for introducing him
the beauty  he found in Linear Algebra and Combinatorics (Lady Davis
Postdoctoral Fellowship from Oct. 1998  to Aut. 2000 in Technion).
He is grateful to Professor Moshe Goldberg, and Raphael Loewy  for
their support and encouragement.

Added in proof. The authors are grateful to Professor V.~Nikiforov
for pointing out that  a similar result was recently  and
independently obtained by N.~M.~M. de Abreu and him  using different
methods.

\frenchspacing

\end{document}